\documentclass[12pt,a4paper,twoside]{article}
\usepackage{amssymb}
\usepackage{amsfonts}
\usepackage{geometry}
\usepackage{color}
\usepackage{mathtools}
\usepackage{amsmath,amsthm,amssymb,amsfonts,enumitem}
\usepackage{CJK}
\usepackage{latexsym}
\usepackage{graphicx}
\usepackage{pgfpages}
\usepackage{mathrsfs}
\usepackage{ytableau}
\usepackage{enumerate}
\usepackage{cite}
\usepackage[hyphens]{url}
\usepackage[bookmarks=false,hidelinks]{hyperref}
\usepackage[affil-it]{authblk}
\usepackage{appendix}
\usepackage{amsmath}
\usepackage{comment}

\newtheorem{definition}{Definition}[section]
\newtheorem{theorem}[definition]{Theorem}
\newtheorem{lemma}[definition]{Lemma}
\newtheorem{proposition}[definition]{Proposition}
\newtheorem{corollary}[definition]{Corollary}
\newtheorem{remark}[definition]{Remark}

\numberwithin{equation}{section}
\title{Exponential Mixing for 2D Stochastic Damped Euler Equation Driven by Bounded Noise}
\author[a]{Rui Bai}
	\author [a] {Chunrong Feng}
	\author[a,b]{Huaizhong Zhao}
	\affil[a]{Department of Mathematical Sciences, Durham University, DH1 3LE, UK}
	\affil[b] {School of Mathematics, Shandong University, Jinan 250100, China}
	
	\affil[ ]{rui.bai@durham.ac.uk, chunrong.feng@durham.ac.uk,  huaizhong.zhao@durham.ac.uk}
	\date{}

\begin{document}

\maketitle
\begin{abstract}
    In this paper, we study the long-time behaviour of the two-dimensional stochastic damped Euler equation on the torus driven by bounded random forcing. Unlike stochastic Navier–Stokes or fractionally dissipative Euler equations, the model possesses no viscous regularisation, so the classical parabolic smoothing is unavailable. We prove that when the damping coefficient is sufficiently large, the associated Markov semigroup admits a unique invariant measure and converges exponentially fast to equilibrium.  The key ingredient is the establishment of a global-in-time $W^{1,\infty}$ estimate for the vorticity. This estimate yields a compact absorbing mechanism in $C(\mathbb {T}^2)$, which enables us to establish the uniqueness of the invariant measure and exponential mixing. To the best of our knowledge, this is the first exponential mixing result for a genuinely inviscid stochastic Euler-type equation. Our approach demonstrates that sufficiently strong linear damping can effectively replace the compactness mechanism usually provided by viscosity and is expected to be applicable to other inviscid or weakly dissipative stochastic partial differential equations driven by bounded random forcing.
     \vskip10pt
    
    \noindent
{\bf MSC2020 subject classifications:} Primary 60H15, 37A25; secondary 35Q31, 35R60\vskip2pt

\noindent
	{\bf Keywords}: Stochastic Euler equations; bounded random forcing; exponential mixing; invariant measures; ergodicity.   
    \end{abstract}
    
\section{Introduction}

The 2D stochastic Euler equation has been extensively studied in recent years as a fundamental model for understanding the interaction between randomness and inviscid fluid dynamics. In contrast to the stochastic Navier–Stokes equations or the stochastic fractional Euler equations, the stochastic Euler equation possesses no viscous regularisation; therefore, many analytical techniques based on parabolic smoothing no longer work. The resulting difficulties have stimulated considerable interest in the existence, regularity, and long-time behaviour of stochastic Euler flows. 
The fundamental well-posedness results for stochastic Euler equations have been established by many works, including Bessaih-Flandoli\cite{bessaih1999}, Brzeźniak-Peszat\cite{brzezniak2001stochastic}, and Lang-Crisan\cite{lang2023well}. Regarding the long-time behaviour of stochastic Euler equations,
several important results have been obtained. The unique ergodicity result for the fractionally dissipated stochastic Euler equation has been established in Constantin, Glatt-Holtz and Vicol\cite{constantin2014unique}. As for the long-time behaviour of stochastic Euler equations with linear damping, Bessaih established the existence of weak random attractors \cite{Bessaih2000stochastic} and stationary solutions \cite{bessaih2008stationary}, while Bessaih and Ferrario proved the existence of invariant measures \cite{bessaih2020invariant}. To the best of our knowledge, no uniqueness or exponential mixing result is currently available in this setting. In this paper, we will establish the exponential mixing result for the stochastic Euler equation with a large linear damping term, driven by bounded random forcing.

From the modelling perspective, bounded random forcing arises naturally in many physical situations where the amplitude of external fluctuations is constrained. In realistic fluid systems, environmental forcing, control inputs, or unresolved small-scale effects are often subject to finite energy budgets and cannot attain arbitrarily large values. In such circumstances, bounded random perturbations provide a more realistic description than Gaussian forcing, whose trajectories are unbounded with positive probability. 
The study of bounded noise models has attracted considerable interest in the theory of stochastic dynamical systems  Kuksin-Piatniski-Shirikyan\cite{kuksin2001coupling}, Kuksin-Shirikyan\cite{kuksin2002coupling,kuksin2012mathematics}. This programme established the uniqueness of stationary measures and exponential mixing for a broad class of stochastic PDEs Kuksin-Piatniski-Shirikyan\cite{kuksin2020exponential1}, Kuksin-Zhang\cite{kuksin2020exponential2}. A distinctive feature of this theory is that it does not rely on Gaussian forcing; instead, it treats bounded random perturbations as deterministic dynamical inputs and exploits the stability and compactness properties of the underlying equations.

Most existing results in this direction concern dissipative systems possessing a strong regularising mechanism. Typical examples include reaction–diffusion equations, complex Ginzburg–Landau equations, Navier–Stokes equations, and other parabolic models \cite{kuksin2020exponential1,kuksin2020exponential2}. In all these cases, compactness is ultimately generated by viscous or parabolic smoothing. Such regularisation plays a fundamental role in the construction of invariant measures and in the analysis of convergence towards equilibrium.
The situation is considerably different for inviscid fluid equations. In the absence of viscosity, there is no parabolic regularisation and no obvious compactness mechanism. Consequently, many of the techniques developed for dissipative stochastic PDEs are no longer available. In the context of fluid equations, bounded random forcing is especially appealing because it allows one to investigate whether turbulent or statistically stationary behaviour can emerge from physically constrained perturbations. Understanding whether bounded random forcing can still generate robust ergodic behaviour for inviscid fluid models remains a largely open problem.

 The combination of bounded forcing and the absence of viscosity makes the stochastic damped Euler equation a particularly challenging case for the bounded-noise theory of infinite-dimensional dynamical systems.
In this paper, we study the stochastic damped Euler equation driven by bounded noise.
These equations are given by 
\begin{equation}\label{eqn:Euler 0}
\begin{cases}
    \partial_t u+\gamma u(t)+(u\cdot \nabla )u(t)+ \nabla p =   H(t),\\
    \nabla\cdot u=0.\end{cases} 
\end{equation}
Here, the damping coefficient $\gamma$ is assumed to be positive. The unknown variables are the velocity vector $u=u(t,\xi)$ and the pressure $p=p(t,\xi)$, where $t$ is the time variable and $\xi \in \mathbb{T}^2$ represents the spatial variable, $H(\cdot)$ is a bounded random process in a function space. We define the vorticity $w$ by $w = \nabla\wedge u = \partial_1u_2- \partial_2u_1$. Note that $u$ can be
recovered from $w$ and the condition $\nabla \cdot u=0$. We study the vorticity form of the above equations
\begin{equation}\label{eqn:Euler 1}
\begin{cases}
     \partial_t w+\gamma w(t) +(\mathcal{K}\ast w)(t)\cdot \nabla w(t) =  \eta(t),\\
     u= \mathcal{K}\ast w,\; \nabla \cdot u=0,
    \end{cases}
\end{equation}
where $\mathcal{K}$ is the Biot-Savart kernel, $\eta=\nabla \wedge H $.

 Although the damping term contributes to dissipation at the level of
energy, it does not provide any direct smoothing of spatial
oscillations compared to viscous or fractional
dissipation. Our objective is to understand whether the damping term alone is sufficient to generate compactness and ergodicity. Our main result states that when the damping coefficient is sufficiently large, the associated Markov semigroup admits a unique invariant measure and converges exponentially fast towards equilibrium. 
The main contribution of this paper is the discovery that
sufficiently strong damping generates a global
\(W^{1,\infty}\)-absorbing structure for the stochastic dynamics. We prove that when the damping coefficient is sufficiently large, solutions satisfy a global-in-time estimate in $W^{1,\infty}$,
$$\sup_{t\ge0}\|w(t)\Vert_{W^{1,\infty}}<\infty .$$
This estimate is critical in establishing a compact absorbing structure for the stochastic dynamics and an exponential mixing result. To the best of our knowledge, this mechanism has not previously been exploited in the study of invariant measures and exponential mixing for stochastic Euler equations. Unlike the classical bounded-noise theory, where compactness is produced by parabolic smoothing, here compactness emerges entirely from the damping mechanism. In this sense, strong damping effectively replaces the role usually played by viscosity in the study of long-time dynamics.


Our results should be contrasted with two previous directions. In the Gaussian forcing setting, important contributions were made by Hairer and Mattingly \cite{hairer2006ergodicity},   Constantin, Glatt-Holtz and Vicol \cite{constantin2014unique}, which established the existence of invariant measures and proved unique ergodicity for the two-dimensional stochastic fluid equation with a viscous Laplacian or fractional Laplacian. 
A crucial ingredient in their analysis is the presence of the (fractional) Laplacian. Another important contribution was made by Bessaih and Ferrario \cite{bessaih2020invariant}, who studied two-dimensional stochastic Euler equations and proved the existence of invariant measures in the weak$\star$ topology of $L^\infty$. Their work represents one of the first rigorous constructions of invariant measures for stochastic Euler equations and demonstrates that the vorticity formulation provides a natural framework for the analysis of long-time behaviour. 
The present work differs from these previous results in several significant aspects. 

First, we consider bounded random forcing rather than Gaussian forcing. This places the problem closer to the bounded-noise stochastic dynamics framework developed by Kuksin and Shirikyan and removes many probabilistic tools available in the Gaussian setting. Second, our equation contains no Laplacian and no fractional dissipation. Consequently, the compactness mechanisms available in dissipative fluid models are absent. Third, 
we establish a global-in-time estimate in $W^{1,\infty}$. This allows us to construct invariant measures supported on a substantially more regular phase space $C(\mathbb{T}^2)$ and provides considerably stronger compactness properties for the dynamics. 

Most importantly, our results go beyond the existing theory currently available for stochastic Euler equations. We prove the uniqueness of invariant measures and exponential mixing when the damping coefficient is sufficiently large. In particular, for any $x \in C(\mathbb{T}^2)$ and $\phi \in \text{Lip}_b(C(\mathbb{T}^2))$,
    \begin{equation}
        |P_i\phi(x)-\int_{E}\phi\;d\mu\vert \leq c(x)e^{-c(\gamma)i}\|\phi\Vert_{\text{Lip}},
    \end{equation}
where $c(x)$, $c(\gamma)>0$ depend on $x$ and $\gamma$ respectively. Thus, the main contribution of the paper is not merely the construction of invariant measures but also the identification of a mechanism through which strong damping generates a global $W^{1,\infty}$-absorbing structure and leads to a complete ergodic description of an inviscid fluid equation driven by bounded noise.

To the best of our knowledge, this is the first exponential mixing result for a genuinely inviscid stochastic Euler-type equation driven by bounded noise. The key novelty is that strong damping compensates for the
complete absence of differential dissipation, as in the case of stochastic Navier-Stokes equations, where the dissipation essentially increases arbitrarily large for high modes. 
From a broader perspective, this work may be viewed as a first step towards extending the bounded-noise stochastic dynamics programme of Kuksin and Shirikyan to inviscid fluid equations. We expect that the mechanism developed here will be useful for other inviscid or weakly dissipative stochastic PDEs driven by bounded random forcing.

\section{Preliminaries}\label{Sec:Pre}
\subsection{Setting and assumptions}
We consider equation (\ref{eqn:Euler 0}) with periodic boundary conditions. Denoting $\mathbb{T}^2:=[0,2\pi]^2$, we define 
$$H:=\{g\in [L^2(\mathbb{T}^2)]^2:\int_{\mathbb{T}^2} g(\xi)\;d\xi=0,\; \nabla\cdot g=0,\;g\; \text{is periodic in} \; \mathbb{T}^2\}.$$ It is well-known that $H$ is a Hilbert space when endowed with the standard $[L^2(\mathbb{T}^2)]^2$ inner
product. We define $V=[H^1(\mathbb{T}^2)]^2\cap H$ and $V^{1,p}= [W^{1,p}(\mathbb{T}^2)]^2\cap H $, where $H^k(\mathbb{T}^2)$ is denoted as the Sobolev space $W^{k,2}(\mathbb{T}^2)$.

We define the tri-linear form $b:V\times V\times V\mapsto \mathbb{R}$ by 
$$b(u,v,w)= \int_{\mathbb{T}^2}(u(\xi)\cdot \nabla)v(\xi)\cdot w(\xi) \;d\xi, u,v,w \in V.$$ It is well known that $b$ is well-defined and continuous on $V\times V\times V$. Let us denote by $V^\star$ the dual space of $V$, then $b$ induces the continuous mappings $B:V\times V \mapsto V^\star $  defined by
$$\langle B(u,v),w\rangle_{V^{\star},V}:= b(u,v,w) ,$$ for $u,v,w \in V$. By the incompressibility condition,
for $u,v,w \in V$, we have 
$$\langle B(u,v),w\rangle_{V^{\star},V}=- \langle B(u,w),v\rangle_{V^{\star},V}, $$ which implies that $$ \langle B(u,v),v\rangle_{V^{\star},V}=0. $$

 We define $\mathcal{H}:=L_0^2$ to be the space of real-valued square-integrable functions on the torus with a vanishing mean and define $\mathbb{Z}^2_0:= \mathbb{Z}^2\setminus \{(0,0)\} $. Then, 
$\{e_k\}_{k \in \mathbb{Z}^2_0 } $ defined by
$$e_k(\xi)=\frac{1}{2\pi}e^{i\xi\cdot k},\;k=(k_1,k_2) \in \mathbb{Z}^2_0, \;\xi \in \mathbb{T}^2$$ forms a complete orthogonal system of a complex space $\mathcal{H}_{\mathbb{C}}$ that is the complexification of $\mathcal{H}$. Moreover, we have that 
$$(\mathcal{K}\ast e_k)(\xi) = \frac{i}{2\pi}\frac{(k_2,-k_1)}{k_1^2+k_2^2}e^{i\xi \cdot k}.$$ Now, we use Fourier modes to describe noise $\eta$. Let $(\Omega, \mathcal{F},\mathbb{P})$ be a complete space, and let $\{\eta_k(t),t\geq 0\}_{k \in \mathbb{Z}^2_0 }$ be a sequence of real valued $\mathbb{P}-a.s.$ stochastic processes defined on it. We suppose that $\{\eta_k(t),t\geq 0\}_{k \in \mathbb{Z}^2_0 } $ are distributed as a certain process $\eta_0$, and $\eta_0|_{[i-1,i)},\; i \in \mathbb{N}$ are i.i.d. So it is sufficient to consider $\eta_0|_{[0,1]} $. We assume $\eta_0|_{[0,1]} $ are $\mathbb{P}-a.s.$ bounded in $L^2[0,1]$. Then, the noise $\eta$ in equation (\ref{eqn:Euler 1}) is defined by 
\begin{equation}\label{eqn:noise}
    \eta(t,\xi):=\sum_{k \in \mathbb{Z}^2_0 }\eta_k(t)\mathrm{Re}(b_ke_k(\xi)),
    \end{equation}
where $b_k \in \mathbb{C}$. Throughout this paper, we assume that there exist $a>3$ and a deterministic
constant $B>0$ such that
\begin{equation}\label{eqn:noise H norm}
   \int_0^1 \|\eta(t)\|_{H^a}^2\,dt
\le
C\sum_{k\in\mathbb Z_0^2}|b_k|^2|k|^{2a}
\le
B^2,
\qquad \mathbb P\text{-a.s.}.
    \end{equation}

We state some basic known results
on the existence and uniqueness of the solutions of the 2D Euler equation under the above assumption. The following result is according to  \cite{bessaih1999} and \cite{bessaih2015stochastic}. In particular, this result works for bounded noise with some regularity assumptions.
\begin{theorem}\label{existence and uniqueness}
    Let $\gamma\geq 0$ and assume (\ref{eqn:noise H norm}) with $a >3$. \\
 i) If $u_0 \in V$ , then on each interval $[0,T]$ there exists at least one weak global solution for equation (\ref{eqn:Euler 0}) with the initial condition $u(0)=u_0$, in the sense that $\mathbb{P}$-a.s.
    $$u \in C([0,T];H)\cap L^2(0,T;V)$$ 
    and for every $ \phi \in V$, $t \in [0,T]$
    \begin{align*}
        \langle u(t) , \phi\rangle+ \gamma\int_0^t\langle u(s), \phi\rangle\;ds - \int_0^t\langle [u(s)\cdot \nabla] \phi, u(s)\rangle\;ds  = \langle u_0,\phi\rangle+ \int_0^t\langle H(s),\phi\rangle\;ds     \end{align*}
$\mathbb{P}$-a.s.. \\
ii) Let $p \in [2,\infty)$. If $u_0 \in V^{1,p} $, the weak global solution u satisfies $$u \in L^\infty(0,T;V^{1,p}), \;\mathbb{P}-a.s. . $$ \\
iii) If $u_0\in V$ and $w_0=\nabla \wedge u_0\in L^\infty$, then $w=\nabla\wedge u$ satisfies $$w \in L^\infty(0,T;L^\infty), \;\mathbb{P}-a.s.$$ and pathwise uniqueness holds.
\end{theorem}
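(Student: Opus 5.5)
Because the forcing is bounded, the argument is pathwise. For $\mathbb{P}$-a.e.\ $\omega$, the realisation $H(\cdot,\omega)$ is a deterministic forcing. By (\ref{eqn:noise H norm}) with $a>3$, its curl $\eta$ lies in $L^2(0,T;H^a)\subset L^2(0,T;W^{1,\infty})$ on every bounded interval. The plan is to run the classical deterministic Euler theory (Yudovich/Bardos) for the damped equation with this forcing, following \cite{bessaih1999,bessaih2015stochastic}. Measurability in $\omega$ is then handled at the end by uniqueness.

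\textbf{Step 1: Galerkin approximations and the energy/enstrophy bound (part i).} I project (\ref{eqn:Euler 0}) onto finitely many Fourier modes. Testing with $u^n$ kills the nonlinear term, since $b(u,u,u)=0$. This gives
\[
\tfrac{d}{dt}\|u^n\|_H^2+2\gamma\|u^n\|_H^2\le 2\|H\|_H\|u^n\|_H .
\]
Testing the vorticity form (\ref{eqn:Euler 1}) with $w^n$ also kills the transport term, because $u^n$ is divergence free. So $\|w^n\|_{L^2}$, equivalently $\|u^n\|_V$, is bounded on $[0,T]$ uniformly in $n$, with constants depending only on $\|u_0\|_V$, $B$ and $T$.

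\textbf{Step 2: compactness and passage to the limit.} I estimate $\partial_t u^n$ in $L^2(0,T;V^\star)$ (or $H^{-s}$) using the bound on $B(u^n,u^n)$. Aubin--Lions then gives a subsequence converging strongly in $L^2(0,T;H)$ and weakly-$\star$ in $L^\infty(0,T;V)$. This strong convergence is enough to pass to the limit in $\int_0^t\langle [u^n\cdot\nabla]\phi,u^n\rangle\,ds$, which proves the weak formulation. Continuity into $H$ follows from weak continuity together with the energy equality, or by the Lions--Magenes lemma.

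\textbf{Step 3: $L^p$ bounds on the vorticity (parts ii, iii).} I multiply the vorticity equation by $|w^n|^{p-2}w^n$. Again the transport term vanishes, leaving
\[
\tfrac{d}{dt}\|w^n\|_{L^p}\le-\gamma\|w^n\|_{L^p}+\|\eta\|_{L^p}.
\]
Since $\eta\in L^2(0,T;H^a)\subset L^1(0,T;L^\infty)$, this yields a bound on $\|w^n(t)\|_{L^p}$ that is uniform in $p$. The Calderón--Zygmund estimate $\|\nabla u\|_{L^p}\le C_p\|w\|_{L^p}$ for $p<\infty$ then gives part ii). Letting $p\to\infty$, and using lower semicontinuity, gives the $L^\infty$ bound on $w$ in part iii).

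\textbf{Step 4: pathwise uniqueness (the main obstacle).} Take two solutions with $w_i\in L^\infty(0,T;L^\infty)$ and set $v=u_1-u_2$. Then
\[
\tfrac{d}{dt}\|v\|_H^2\le 2\int|v|^2|\nabla u_2|\,d\xi\le 2\|\nabla u_2\|_{L^p}\|v\|_{L^\infty}^{2/p}\|v\|_H^{2-2/p}.
\]
Note that the damping only helps here and the noise cancels. The difficulty is that $\nabla u_2\notin L^\infty$. Yudovich's trick handles this: $\|\nabla u_2\|_{L^p}\le Cp\|w_2\|_{L^\infty}$, so $E=\|v\|_H^2$ satisfies $E'\le CpM^{2/p}E^{1-1/p}$. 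Integrating and optimising over $p\to\infty$ (equivalently, an Osgood argument with modulus $r\log(1/r)$) forces $E\equiv0$ when $E(0)=0$.

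Since the solution is unique and is obtained as the limit of Galerkin approximations that are measurable in $\omega$, it is progressively measurable. This completes the pathwise statement.
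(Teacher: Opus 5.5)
The paper does not prove this theorem; it quotes it from \cite{bessaih1999} and \cite{bessaih2015stochastic}. Your pathwise reduction is the natural self-contained route: the noise is additive and bounded, so each realisation is a deterministic forcing. Steps 1, 2 and 4 are sound. Fourier truncation commutes with the curl and $\langle P_n(u^n\cdot\nabla w^n),w^n\rangle=\langle u^n\cdot\nabla w^n,w^n\rangle=0$, so the enstrophy bound holds. Aubin--Lions together with $\partial_t u\in L^2(0,T;V^\star)$ gives a weak solution in $C([0,T];H)$. The Yudovich argument for the difference is correct, with the damping and the noise dropping out.

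\textbf{The gap is in Step 3.} For the Fourier--Galerkin system the vorticity equation is $\partial_t w^n+\gamma w^n+P_n(u^n\cdot\nabla w^n)=P_n\eta$. Testing with $|w^n|^{p-2}w^n$ leaves the term $\langle u^n\cdot\nabla w^n,\,P_n(|w^n|^{p-2}w^n)\rangle$. This does not vanish for $p\neq 2$, because $|w^n|^{p-2}w^n$ is not in the range of $P_n$. Spectral truncation destroys the exact transport structure, and with it conservation of $L^p$ norms and the maximum principle. So this computation gives you no $n$-uniform $L^p$ bound for $p\neq2$, let alone a $p$-uniform one.

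The consequences are not cosmetic. You have proved neither (ii) nor the existence half of (iii), and Step 4 has no solution in the Yudovich class to be applied to.

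You need an approximation that keeps the transport form exact. Any of the following works:
\begin{itemize}
\item Mollify $w_0$ (the forcing is already smooth, $\eta\in L^2(0,T;H^a)$ with $a>3$) and use the global classical solutions of the damped 2D Euler equation (Wolibner/Kato). For these, $\frac{d}{dt}\|w\|_p\le-\gamma\|w\|_p+\|\eta\|_p$ holds exactly.
\item Add a viscous term $-\nu\Delta w$, whose pairing with $|w|^{p-2}w$ equals $\nu(p-1)\int|w|^{p-2}|\nabla w|^2\ge0$, and let $\nu\to0$.
\item Regularise only the advecting field, replacing $u\cdot\nabla w$ by $(J_\varepsilon u)\cdot\nabla w$ with $J_\varepsilon$ a mollifier. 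Each approximation is then a linear transport by a smooth divergence-free field.
\end{itemize}
With any of these, your energy/enstrophy bounds and the compactness argument of Steps 1--2 carry over verbatim. Lower semicontinuity then gives the $L^p$ and $L^\infty$ vorticity bounds in the limit.

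Alternatively, you could obtain the $L^p$ bounds a posteriori via DiPerna--Lions renormalisation of the limit. This is admissible here since $u\in L^\infty(0,T;W^{1,2})$ and $w\in L^\infty(0,T;L^2)$ sit exactly at the borderline $\tfrac12+\tfrac12\le1$.

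Once uniqueness holds in (iii), the whole approximating sequence converges, not just an $\omega$-dependent subsequence. That is what makes your final measurability remark valid.
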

In \cite{bessaih2020invariant}, the authors proved that, under suitable assumptions for the noise, the semigroup $P_t$ defined as
$$P_t\phi(x)= \mathbb{E}[\phi(w(t;x))], $$ is Markovian. Then, they proved that there exists at least one invariant measure on $\mathcal{B}(L^\infty,\mathcal{T}_{bw^\star})$, where $(L^\infty,\mathcal{T}_{bw^\star}) $ is the bounded weak$\star$ topology of $L^\infty$. In this paper, we further prove that under our assumptions on the noise $\eta$, if $w_0 \in W^{1,\infty}(\mathbb{T}^2)$, then $w \in L^\infty(0,T;W^{1,\infty}(\mathbb{T}^2))$. Moreover, if the damping term $\gamma$ is sufficiently large, then there exists a unique invariant measure for $P_t$ on $\mathcal{B}(C(\mathbb{T}^2))$. Before doing that, we need to introduce the $W^{1,\infty}(\mathbb{T}^2)$ space.

\subsection{$W^{1,\infty}(\mathbb{T}^2)$ space and its topology}
The Sobolev space $W^{-1,1}(\mathbb{T}^2)$ is defined as 
\begin{equation}\label{def: W 1, -1}
    W^{-1,1}(\mathbb{T}^2):=\left\{h_0+\sum_{i=1}^2 \partial_i h_i \in \mathcal{D}^\prime(\mathbb{T}^2):h_0,h_1,h_2 \in L^1(\mathbb{T}^2)  \right\},
    \end{equation}
    equipped with the norm 
    $$\|h\Vert_{W^{-1,1}(\mathbb{T}^2)}=\inf\left\{\|h_0\Vert_{L^1(\mathbb{T}^2)} + \sum_{i=1}^2 \|h_i\Vert_{L^1(\mathbb{T}^2)}: h= h_0+\sum_{i=1}^2 \partial_ih_i \right\}.$$ 
    By this definition, $C^\infty(\mathbb{T}^2)$ is dense in $W^{-1,1}(\mathbb{T}^2)$,  since $C^\infty(\mathbb{T}^2)$ is dense in $L^1(\mathbb{T}^2)$. Therefore, $W^{-1,1}(\mathbb{T}^2) $ is a separable Banach space.

    \begin{lemma} The dual space of $W^{-1,1}(\mathbb T^2) $ is $W^{1,\infty}(\mathbb T^2) $, i.e.,
        \begin{equation}
\bigl(W^{-1,1}(\mathbb T^2)\bigr)^*
\simeq
W^{1,\infty}(\mathbb T^2).          
\end{equation}
    \end{lemma}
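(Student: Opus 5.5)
We realise $W^{-1,1}(\mathbb T^2)$ isometrically as a quotient of $L^1(\mathbb T^2)^3$ and use the standard duality between quotients and annihilators. Let
$$T:L^1(\mathbb T^2)^3\to W^{-1,1}(\mathbb T^2),\qquad T(h_0,h_1,h_2)=h_0+\partial_1h_1+\partial_2h_2 ,$$
with the norm $\|(h_0,h_1,h_2)\|=\sum_i\|h_i\|_{L^1}$. By definition \eqref{def: W 1, -1}, $T$ is onto and the $W^{-1,1}$ norm is exactly the quotient norm. So $W^{-1,1}\cong L^1(\mathbb T^2)^3/N$ isometrically, where $N=\ker T$. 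We first check that $N$ is closed: if $(h^n)\subset N$ converges in $L^1$, then $T h^n\to Th$ in $\mathcal D'$, so $Th=0$. The standard fact
$$(X/N)^*\cong N^\perp\subset X^*$$
then gives $(W^{-1,1})^*\cong N^\perp\subset (L^\infty)^3$, isometrically with the norm $\max_i\|g_i\|_{L^\infty}$ (the dual of the $\ell^1$-sum norm).

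Next we identify $N^\perp$. Test $N$ with elements of the form $h=(-\partial_1\varphi,\varphi,0)$ and $(-\partial_2\varphi,0,\varphi)$ for $\varphi\in C^\infty(\mathbb T^2)$; these lie in $N$. If $g=(g_0,g_1,g_2)\in N^\perp$, then
$$\int g_1\varphi=\int g_0\,\partial_1\varphi\quad\text{for all }\varphi,$$
that is, $g_1=-\partial_1 g_0$ in the distributional sense, and likewise $g_2=-\partial_2 g_0$. Hence $g_0\in W^{1,\infty}$ and $g=(g_0,-\nabla g_0)$.

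Conversely, given $v\in W^{1,\infty}$, set $g=(v,-\partial_1v,-\partial_2v)$. For $h\in N$ with $h_i\in L^1$ we need $\int v h_0-\sum_i\int\partial_i v\,h_i=0$. This is the pairing $\langle Th,v\rangle$ defined through Lipschitz test functions, and it is the main technical point. Mollify: $v_\varepsilon=v*\rho_\varepsilon$ is smooth, so the identity holds for $v_\varepsilon$ by the definition of $Th=0$ in $\mathcal D'$. Then $v_\varepsilon\to v$ uniformly, and $\partial_iv_\varepsilon\to\partial_iv$ boundedly a.e. (weak$^*$ convergence in $L^\infty$ against $h_i\in L^1$ already suffices). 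Dominated convergence then passes to the limit. This shows $N^\perp=\{(v,-\nabla v):v\in W^{1,\infty}\}$.

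Finally, the map $v\mapsto(v,-\nabla v)$ gives the isomorphism. Concretely, the functional is $\Lambda_v(h)=\int vh_0-\sum_i\int\partial_iv\,h_i$, which is independent of the representation of $h$, with
$$\|\Lambda_v\|=\max\bigl(\|v\|_\infty,\|\partial_1v\|_\infty,\|\partial_2v\|_\infty\bigr).$$
This is an equivalent norm on $W^{1,\infty}$, which gives $\bigl(W^{-1,1}\bigr)^*\simeq W^{1,\infty}$. The main obstacle is the well-definedness step, i.e. showing that $(v,-\nabla v)$ annihilates all of $N$ and not only its smooth elements. The mollification argument above handles it, using $L^1$–$L^\infty$ duality and the density of smooth functions.
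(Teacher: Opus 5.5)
Your proof is correct and follows the same route as the paper. Both arguments realise $W^{-1,1}$ as the quotient $L^1(\mathbb T^2)^3/\ker T$, identify the dual with the annihilator of $\ker T$ in $(L^\infty)^3$, and test against elements such as $(-\partial_1\varphi,\varphi,0)$ to force $g=(g_0,-\nabla g_0)$ with $g_0\in W^{1,\infty}$. Your extra steps supply details the paper leaves implicit when it simply says the functional ``coincides with the natural pairing'': the closedness of $\ker T$, the mollification argument showing that $(v,-\nabla v)$ annihilates $\ker T$ for every $v\in W^{1,\infty}$ (so the pairing is well defined), and the explicit dual norm $\max(\|v\|_\infty,\|\partial_1v\|_\infty,\|\partial_2v\|_\infty)$.
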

\begin{proof}
Let
\[
A:L^1(\mathbb T^2)^3\rightarrow W^{-1,1}(\mathbb T^2),
\qquad
A(h_0,h_1,h_2)
=
h_0+\partial_1 h_1+\partial_2 h_2.
\]
By the definition of \(W^{-1,1}(\mathbb T^2)\), the operator \(A\) is surjective and
\[
W^{-1,1}(\mathbb T^2)
\simeq
L^1(\mathbb T^2)^3/\ker A,
\]
equipped with the quotient norm. Hence,
\[
\bigl(W^{-1,1}(\mathbb T^2)\bigr)^*
\simeq
\Bigl\{
G\in \bigl(L^1(\mathbb T^2)^3\bigr)^*
:
G|_{\ker A}=0
\Bigr\}.
\]
Since
\[
\bigl(L^1(\mathbb T^2)^3\bigr)^*
=
L^\infty(\mathbb T^2)^3,
\]
for every \(G\in (W^{-1,1}(\mathbb T^2))^*\), there exist
\(g_0,g_1,g_2\in L^\infty(\mathbb T^2)\) such that
\[
G(h_0,h_1,h_2)
=
\int_{\mathbb T^2} g_0 h_0\,d\xi
+
\int_{\mathbb T^2} g_1 h_1\,d\xi
+
\int_{\mathbb T^2} g_2 h_2\,d\xi .
\]
The condition \(G|_{\ker A}=0\) means that whenever
\[
h_0+\partial_1 h_1+\partial_2 h_2=0
\]
in the sense of distributions, one has
\[
\int_{\mathbb T^2} g_0 h_0\,d\xi
+
\int_{\mathbb T^2} g_1 h_1\,d\xi
+
\int_{\mathbb T^2} g_2 h_2\,d\xi
=
0.
\]
Taking
\[
h_0=-\partial_1\varphi,
\qquad
h_1=\varphi,
\qquad
h_2=0,
\]
with \(\varphi\in C^\infty(\mathbb T^2)\), we obtain
\[
-\int_{\mathbb T^2} g_0\,\partial_1\varphi\,d\xi
+
\int_{\mathbb T^2} g_1\,\varphi\,d\xi
=
0.
\]
Hence,
\[
g_1=-\partial_1 g_0
\]
in the sense of distributions. Similarly,
\[
g_2=-\partial_2 g_0.
\]
Since \(g_1,g_2\in L^\infty(\mathbb T^2)\), it follows that
\[
g_0\in W^{1,\infty}(\mathbb T^2).
\]
Therefore, for any
\[
h=h_0+\partial_1 h_1+\partial_2 h_2
\in W^{-1,1}(\mathbb T^2),
\]
the functional \(G\) can be written as
\[
G(h)
=
\int_{\mathbb T^2} g_0 h_0\,d\xi
-
\sum_{i=1}^{2}
\int_{\mathbb T^2}
\partial_i g_0\, h_i\,d\xi.
\]
This coincides with the natural pairing between
\(W^{1,\infty}(\mathbb T^2)\) and \(W^{-1,1}(\mathbb T^2)\). Consequently,
\[
\bigl(W^{-1,1}(\mathbb T^2)\bigr)^*
\simeq
W^{1,\infty}(\mathbb T^2).
\]
\end{proof}

    To shorten notation, we write $W^{1,\infty}$ for the space $W^{1,\infty}(\mathbb{T}^2 ) $ and $W^{-1,1} $ for the space $W^{-1,1}(\mathbb{T}^2) $. By the discussion above, $W^{1,\infty} $ is the dual space of $W^{-1,1} $. The space $W^{1,\infty} $ is not separable under norm topology, whereas the space $W^{-1,1} $ is separable. In fact, $C^\infty(\mathbb{T}^2) $ is separable, and $C^\infty(\mathbb{T}^2)$ is dense in $W^{-1,1}$ by the above definition. We recall the meaning of convergence in $W^{1,\infty} $ with respect to weak$\star$ topology: $x_n\overset{\star} {\rightharpoonup} x$ in $W^{1,\infty} $ means
    $$\langle x_n,h \rangle_{W^{1,\infty},W^{-1,1}} \rightarrow \langle x,h \rangle_{W^{1,\infty},W^{-1,1}},\; \forall h \in W^{-1,1}. $$ Since $ W^{1,\infty}$ can be characterised by Lipschitz continuous functions, by the Lipschitz functions approximation Theorem (see Theorem 6.11 of \cite{MR3409135}), for any $x \in W^{1,\infty} $ and $\varepsilon>0 $, there exist $N>0$ and 
    $x_n \in C^1$ such that $$\mathcal{L}^2\left(\left\{\xi: x(\xi)\neq x_n(\xi)\; or\; \nabla x(\xi) \neq \nabla x_n(\xi) \right\}\right)<\varepsilon ,\; \forall n \geq N, $$ where $\mathcal{L}^2 $ is Lebesgue measure on $\mathbb{T}^2$. Thus, for any $x \in W^{1,\infty} $ there exists a sequence $\{x_n\} \in C^1$ such that
    $x_n \overset{\star} {\rightharpoonup} x  $. Since $C^1$ is separable, this shows that $W^{1,\infty} $ is separable under weak$\star$ topology.

    We denote by $\mathcal{T}_n,\mathcal{T}_{bw\star}, \mathcal{T}_{w\star}$ the norm topology, the bounded weak$\star$ topology, and the weak$\star$ topology of 
    $W^{1,\infty}$, respectively. By \cite{megginson2012introduction}, we have 
    \begin{equation}\label{subset:topology}
        \mathcal{T}_{w\star}\subsetneq \mathcal{T}_{bw\star}\subsetneq \mathcal{T}_n.    \end{equation}
   By \cite{megginson2012introduction} and the same argument as in \cite{bessaih2020invariant}, we have that $f:W^{1,\infty} \rightarrow \mathbb{R} $ is $\mathcal{T}_{bw\star} $-continuous if and only if it is sequentially $\mathcal{T}_{w\star} $-continuous. Denoting by $C(W^{1,\infty} ,\mathcal{T}), SC(W^{1,\infty} ,\mathcal{T}) $ the space of all functions $f: W^{1,\infty} \rightarrow \mathbb{R}$ that are $\mathcal{T}$-continuous and sequentially $\mathcal{T}$-continuous, respectively. Thus, we have that 
    $$C(W^{1,\infty} ,\mathcal{T}_{w\star})\subsetneq C(W^{1,\infty} ,\mathcal{T}_{bw\star} )= SC(W^{1,\infty} ,\mathcal{T}_{w\star})\subsetneq C(W^{1,\infty} ,\mathcal{T}_n). $$ 
We write $E=C(\mathbb{T}^2)$. Since the embedding
\[
W^{1,\infty}(\mathbb{T}^2)\hookrightarrow E
\]
is compact, every subsequence of $(x_n)$ admits a further subsequence converging strongly in $E$. As $x_n \overset{\ast}{\rightharpoonup} x$ in $W^{1,\infty}(\mathbb{T}^2)$, every such limit must coincide with $x$. Hence,
\[
x_n \to x \quad \text{strongly in } E.
\]
Therefore, the embedding
\[
i:(W^{1,\infty},T_{w^\star})\to C(\mathbb T^2)
\]
is sequentially continuous. Since
\[
C(W^{1,\infty},T_{bw^\star})
=
SC(W^{1,\infty},T_{w^\star}),
\]
it follows that the embedding
\[
i:(W^{1,\infty},T_{bw^\star})\to C(\mathbb T^2)
\]
is continuous. Denoting by $C(E)$ the space of all functions $f: E \rightarrow \mathbb{R}$ that are continuous under the norm topology of $E$. 
    We also have 
    \begin{equation}\label{C(E)}
        C(E)\subset SC(W^{1,\infty} ,\mathcal{T}_{w\star})=C(W^{1,\infty} ,\mathcal{T}_{bw\star} ) .    \end{equation}

Let us denote by $\mathcal{B}(\mathcal{T})$ the $\sigma$-algebra of Borelian subsets of $W^{1,\infty}$ with respect to a given topology $\mathcal{T}$. According to (\ref{subset:topology}) and the same arguments as in Lemma 1 of \cite{bessaih2020invariant}, we have $$\mathcal{B}(\mathcal{T}_{w\star})=\mathcal{B}(\mathcal{T}_{bw\star} ) \subset  \mathcal{B}(\mathcal{T}_n ).$$

\section{Global-in-time boundedness for solution}
In this section, we will prove a bound for the $L^\infty(0,\infty;W^{1,\infty})$ norm of the solution $w$ when the damping term $\gamma$ is large. We need to mention that Bessaih and Ferrario \cite{bessaih2020invariant} showed a bound for the $L^\infty(0,T;W^{1,4})$ norm of the solution for the stochastic Euler equation with Gaussian noise. However, the bound they proved does not hold uniformly for $T\geq 0$. The global-in-time $W^{1,\infty}$ estimate provides a compactness mechanism for
the dynamics: for each initial condition $w_0\in W^{1,\infty}$, the corresponding
trajectory remains in a bounded subset of $W^{1,\infty}$, which is relatively
compact in $C(\mathbb T^2)$. This compactness structure provides the compactness ingredient required for the discrete-time mixing argument developed below. To establish this estimate, we first prove several auxiliary lemmas.
\begin{lemma}\label{Lemma:K Lp}
  Let $p_0>2$ be fixed. If $w\in L^p(\mathbb T^2)$ with $p\ge p_0$ and $u= \mathcal{K}\ast w$, then
\[
\|\mathcal{K}*w\|_\infty \le C_{p_0}\|w\|_p .
\]
\end{lemma}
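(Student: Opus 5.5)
The plan is to write the Biot--Savart law on the torus as convolution with a kernel that is integrable in a suitable Lebesgue space, and then apply H\"older's inequality. On $\mathbb{T}^2$, with $w$ of zero mean, one has $u=\nabla^\perp\psi$ with $-\Delta\psi=w$. Thus $\mathcal{K}=\nabla^\perp G$, where $G$ is the periodic Green's function of $-\Delta$ on mean-zero functions. The standard structure of $G$ is
\[
G(\xi)=-\tfrac{1}{2\pi}\log|\xi|+R(\xi)
\]
near the origin, with $R$ smooth on the fundamental domain. This gives the pointwise bound $|\mathcal{K}(\xi)|\le C|\xi|^{-1}$ for $\xi\in[-\pi,\pi]^2\setminus\{0\}$, understood periodically.

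One can verify this without quoting Green's function asymptotics. Take a smooth cutoff $\chi$ supported near $0$ and split
\[
\mathcal{K}=\chi\,\nabla^\perp\Big(-\tfrac{1}{2\pi}\log|\xi|\Big)+\big(\mathcal{K}-\chi\,\nabla^\perp(-\tfrac{1}{2\pi}\log|\xi|)\big).
\]
The second piece solves a Poisson-type problem with smooth data away from the singularity, so it is bounded. This is the only step that requires genuine care, since one must check that the remainder is bounded rather than merely locally integrable.

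From $|\mathcal{K}(\xi)|\le C|\xi|^{-1}$ it follows that $\mathcal{K}\in L^{q}(\mathbb{T}^2)$ for every $q<2$, because
\[
\int_{|\xi|\le \pi\sqrt2}|\xi|^{-q}\,d\xi=2\pi\int_0^{\pi\sqrt2} r^{1-q}\,dr<\infty .
\]
Let $p\ge p_0>2$ and let $p'=p/(p-1)$ be the conjugate exponent. Then $p'\le p_0'=p_0/(p_0-1)<2$. H\"older's inequality pointwise in $\xi$ gives
\[
|(\mathcal{K}\ast w)(\xi)|\le \|\mathcal{K}\|_{L^{p'}}\|w\|_{L^p}.
\]

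To make the constant depend only on $p_0$, I would use the finite measure of $\mathbb{T}^2$. Since $p'\le p_0'$, H\"older again gives
\[
\|\mathcal{K}\|_{L^{p'}}\le |\mathbb{T}^2|^{1/p'-1/p_0'}\|\mathcal{K}\|_{L^{p_0'}}\le \max(1,(4\pi^2))\,\|\mathcal{K}\|_{L^{p_0'}}.
\]
Explicitly,
\[
\|\mathcal{K}\|_{L^{p_0'}}^{p_0'}\le C\int_0^{\pi\sqrt2} r^{1-p_0'}\,dr=\frac{C(\pi\sqrt2)^{2-p_0'}}{2-p_0'}<\infty .
\]
Therefore $C_{p_0}:=\max(1,4\pi^2)\,\|\mathcal{K}\|_{L^{p_0'}}$ works uniformly for all $p\ge p_0$. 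Taking the supremum over $\xi$ yields $\|\mathcal{K}\ast w\|_\infty\le C_{p_0}\|w\|_p$.

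The main obstacle is only the kernel bound $|\mathcal{K}(\xi)|\lesssim|\xi|^{-1}$ on the torus. If that step is awkward, an alternative route avoids kernels. Use the Fourier representation $(\mathcal{K}\ast e_k)=\frac{i}{2\pi}\frac{k^\perp}{|k|^2}e_k$, so that $\|\nabla u\|_{L^p}\le C_p\|w\|_{L^p}$ by Calder\'on--Zygmund theory and $\|u\|_{L^p}\le C\|w\|_{L^p}$ by Poincar\'e. Then apply Morrey's embedding $W^{1,p}\hookrightarrow L^\infty$ for $p>2$. In this route, however, the Calder\'on--Zygmund constant $C_p\sim p$ grows with $p$, so one would first reduce to the case $p=p_0$ via $\|w\|_{p_0}\le C\|w\|_p$. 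That reduction also gives a constant depending only on $p_0$.
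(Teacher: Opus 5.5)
Your proof is correct, but your main route differs from the paper's. Your fallback route is essentially the paper's proof.

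The paper writes $u=\nabla^\perp v$ with $\Delta v=w$ and proceeds in three steps:
\begin{itemize}
\item the Sobolev--Poincar\'e bound $\|u\|_\infty\le C\|\nabla u\|_{p_0}$ for the mean-zero field $u$;
\item the $L^{p_0}$ elliptic estimate $\|\nabla^2 v\|_{p_0}\le C_{p_0}\|\Delta v\|_{p_0}$, which the paper calls ``Schauder'' but is a Calder\'on--Zygmund estimate;
\item $\|w\|_{p_0}\le C\|w\|_p$ on the finite-measure torus.
\end{itemize}
The last step is exactly the reduction to $p=p_0$ you propose in order to avoid the growth of the Calder\'on--Zygmund constant.

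Your main route instead uses the pointwise bound $|\mathcal K(\xi)|\lesssim|\xi|^{-1}$ and H\"older's inequality, with no singular-integral theory. It is more elementary and handles $p=\infty$ at no extra cost. It also gives an explicit constant $C_{p_0}\lesssim\bigl((p_0-1)/(p_0-2)\bigr)^{1/p_0'}$, which makes the blow-up as $p_0\downarrow 2$ visible. The price is that you need the local structure of the periodic Green's function.

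For that step, be more precise than ``smooth data away from the singularity''. Take $\chi\equiv 1$ near $0$, with support in a ball of radius less than $\pi$, and set $\Phi=-\frac{1}{2\pi}\log|\xi|$. Then $-\Delta(G-\chi\Phi)$ is smooth on the whole torus, because the two Dirac masses cancel. Hence $G-\chi\Phi$ is smooth, and $\mathcal K-\chi\nabla^\perp\Phi=\nabla^\perp(G-\chi\Phi)+\Phi\nabla^\perp\chi$ is bounded.

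Conversely, the paper's route proves more than the statement, namely $\|\nabla u\|_{p_0}\le C_{p_0}\|w\|_{p_0}$. That is the kind of bound reused in the gradient estimates of Theorem~\ref{Theorem:w bounded}.
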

\begin{proof}
    Let $v$ be the solution for the equation $\nabla^\perp v = u.$ Then, $v$ satisfies the elliptic system
    $$\Delta v =w.$$
    Hence, by Schauder estimates and the Sobolev inequality, for some $2<p^\prime<p$
    $$\|u\Vert_\infty  \leq C\|\nabla u\Vert_{p_0} = C\|\nabla^2 v\Vert_{p_0} \leq C_{p_0}\|\Delta v\Vert_{p_0}\leq C_{p_0}\|w\Vert_p.$$
\end{proof}

We introduce the Ornstein-Uhlenbeck process
    \begin{equation}\label{eqn:OU}
        \partial_tZ_\lambda(t)= -\lambda Z_{\lambda}(t)+\eta(t),\; Z_\lambda(0)=0,
    \end{equation}
for $\lambda>0$. Then, we have $$Z_\lambda(t)= \int_{0}^{t}e^{-\lambda(t-s)}\eta(s)\;ds.$$ 
Indeed, using the uniform bound on each unit interval and the exponential decay, we have $\mathbb{P}-a.s. $
\begin{equation}\label{inequality:Z lambda}
    \sup_{t \geq 0}\|Z_\lambda(t)\Vert_{H^a} \leq C \sup_{t \geq 0}\int_{0}^{t}e^{-\lambda(t-s)}\|\eta(s)\Vert_{H^a}\;ds \leq B\sum_{j=0}^{\infty} e^{-\lambda j}
\le \frac{C B}{\lambda},
\end{equation} for some $a>3$.
\begin{lemma}\label{lemma:w infity}
  Let $\gamma>0$. 
  If $w_0 \in W^{1,\infty}$, then 
 \begin{equation*}
     \sup_{t\geq0}\| w(t)\Vert_{\infty} < \infty,\; \mathbb{P}-a.s. \end{equation*}
     Moreover, there exist $R(\gamma)$ such that 
     \begin{equation}\label{inequality:R}
          \|w(t)\Vert_{L^\infty} \leq  \|w_0\Vert_\infty\cdot e^{-\frac{\gamma t}{2}}+R,\;  \mathbb{P}-a.s. .     
        \end{equation}
    
\end{lemma}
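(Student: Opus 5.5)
The approach is a Lagrangian (characteristics) argument applied directly to the vorticity equation (\ref{eqn:Euler 1}). The transport term $(\mathcal K\ast w)\cdot\nabla w$ carries no $L^\infty$ growth because $u=\mathcal K\ast w$ is divergence free. So along each characteristic $w$ satisfies a scalar linear ODE with damping $\gamma$ and forcing $\eta$.

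Since $w_0\in W^{1,\infty}$, Theorem \ref{existence and uniqueness} (iii) gives a pathwise unique solution with $w\in L^\infty(0,T;L^\infty)$ on every finite interval. We may also assume, by (ii) and the regularity $a>3$ of the noise, that $u$ is Lipschitz in space; if necessary we first work with smooth approximations. The flow map $X(t;x)$ is then defined by $\dot X=u(t,X)$, $X(0)=x$, and is a measure-preserving homeomorphism of $\mathbb T^2$. Along characteristics,
\[
\frac{d}{dt}\,w(t,X(t;x))=-\gamma\, w(t,X(t;x))+\eta(t,X(t;x)),
\]
and Duhamel's formula gives
\[
w(t,X(t;x))=e^{-\gamma t}w_0(x)+\int_0^t e^{-\gamma(t-s)}\eta(s,X(s;x))\,ds .
\]
Taking the supremum over $x$, and using that $X(t;\cdot)$ is onto, we obtain
\[
\|w(t)\|_\infty\le e^{-\gamma t}\|w_0\|_\infty+\int_0^t e^{-\gamma(t-s)}\|\eta(s)\|_\infty\,ds .
\]
If one prefers to avoid characteristics, the same inequality follows from $L^p$ energy estimates: multiply by $|w|^{p-2}w$, note that the transport term vanishes by incompressibility, and let $p\to\infty$.

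It remains to control the forcing integral uniformly in $t$. This is essentially the computation in (\ref{inequality:Z lambda}) with $\lambda=\gamma$. By the Sobolev embedding $H^a\hookrightarrow L^\infty$ (valid since $a>3>1$), we have $\|\eta(s)\|_\infty\le C\|\eta(s)\|_{H^a}$. Split $[0,t]$ into unit intervals $[t-j-1,t-j]$ and apply Cauchy--Schwarz together with (\ref{eqn:noise H norm}). Each block then contributes at most $C e^{-\gamma j}B$, which gives
\[
\sup_{t\ge0}\int_0^t e^{-\gamma(t-s)}\|\eta(s)\|_\infty\,ds\le CB\sum_{j\ge0}e^{-\gamma j}=\frac{CB}{1-e^{-\gamma}}=:R(\gamma).
\]
Because $e^{-\gamma t}\le e^{-\gamma t/2}$, this yields (\ref{inequality:R}), and $\sup_t\|w(t)\|_\infty\le\|w_0\|_\infty+R<\infty$ almost surely.

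The main obstacle is rigor in the first step: justifying either the characteristic computation or the $p\to\infty$ limit at the available regularity of $w$. I would handle it by regularising, i.e.\ mollifying $w_0$ and $\eta$ and solving the smooth damped Euler problem, for which the flow is classical. The bound holds for the approximations with constants independent of the regularisation, since it depends only on $\|w_0\|_\infty$ and $B$. One then passes to the limit using weak-$\star$ lower semicontinuity of the $L^\infty$ norm and the uniqueness from Theorem \ref{existence and uniqueness} (iii), which identifies the limit with $w$.
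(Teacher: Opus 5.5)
Your proof is correct, but it takes a different and more direct route than the paper.

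\textbf{How the two routes differ.} The paper never estimates $w$ itself. It subtracts $Z_\lambda(t)=\int_0^t e^{-\lambda(t-s)}\eta(s)\,ds$ and runs the $L^p$ estimate on $Y_\lambda=w-Z_\lambda$. This creates the coupling term $\langle u\cdot\nabla Z_\lambda,|Y_\lambda|^{p-2}Y_\lambda\rangle$. Controlling it via Lemma \ref{Lemma:K Lp} turns the Gronwall exponent into $-\gamma+C\|Z_\lambda\|_{H^a}$, and $\lambda$ must be taken large to keep this below $-\gamma/2$. That is where the rate $e^{-\gamma t/2}$ comes from. Here the forcing is an honest function, $\eta\in L^2_{\mathrm{loc}}(0,\infty;H^a)$ with $H^a\hookrightarrow L^\infty$, so subtracting an Ornstein--Uhlenbeck process buys nothing. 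Your inequality $\frac{d}{dt}\|w\|_p+\gamma\|w\|_p\le\|\eta\|_p$ gives the sharper rate $e^{-\gamma t}$ with no auxiliary parameter. Your regularisation-plus-uniqueness step also goes beyond the paper, which performs the same $L^p$ manipulations only formally on a vorticity known to lie in $L^\infty$.

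\textbf{A minor correction.} Item (ii) of Theorem \ref{existence and uniqueness} gives a log-Lipschitz velocity, not a Lipschitz one. This still yields a unique measure-preserving homeomorphic flow, and your regularisation makes the point moot anyway.

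\textbf{The size of $R(\gamma)$.} What the paper's route seems to add is $R(\gamma)=O(\gamma^{-1})$, which is used later in the proof of Theorem \ref{Theorem:w bounded}. Your $R(\gamma)=CB/(1-e^{-\gamma})$ decreases in $\gamma$ but does not tend to zero. However, the paper's decay rests on the step $B\sum_{j\ge0}e^{-\lambda j}\le CB/\lambda$ in (\ref{inequality:Z lambda}). That step is false for large $\lambda$, since the sum is at least $1$, so your bookkeeping is the accurate one. If decay in $\gamma$ is needed, apply Cauchy--Schwarz with the weight $e^{-\gamma(t-s)}$ inside each unit block. This gives $R(\gamma)\le CB\,\gamma^{-1/2}(1-e^{-\gamma})^{-1}$. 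The order $\gamma^{-1/2}$ cannot be improved uniformly under a bound like (\ref{eqn:noise H norm}) that is only $L^2$ in time. To see this, take shear forcings, for which the equation becomes linear, with time profiles concentrated on windows of length $\gamma^{-1}$.
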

\begin{proof}
 Defining $Y_\lambda=w-Z_\lambda$, we get 
\begin{equation}\label{eqn:Y}
    \partial_tY_\lambda+\gamma Y_\lambda(t)=-B(u(t),w(t)) +(\lambda-\gamma) Z_\lambda(t).
\end{equation}
Multiplying both sides of equation (\ref{eqn:Y}) with $ Y_\lambda(t) \cdot | Y_\lambda(t)\vert^{p-2} $ , $p>2$ and then integrating over $\mathbb{T}^2$; we get
\begin{align}
    \frac{1}{p}\partial_t\| Y_\lambda\Vert_{p}^{p}+ \gamma \| Y_\lambda(t)\Vert_{p}^{p}&= -\langle u(t)\cdot \nabla Y_\lambda(t)+ u(t)\cdot \nabla Z_\lambda(t), Y_\lambda(t) \cdot | Y_\lambda(t)\vert^{p-2}\rangle \nonumber \\&\ \  + \langle  (\lambda-\gamma)  Z_\lambda(t), Y_\lambda(t) \cdot | Y_\lambda(t)\vert^{p-2}  \rangle.
    \end{align}
    Since $\nabla \cdot  u(t)=0$, we have $$ \langle  u(t)\cdot \nabla Y_\lambda(t), Y_\lambda(t) \cdot | Y_\lambda(t)\vert^{p-2} \rangle = \frac1p \langle u(t),\nabla |Y_\lambda(t)\vert^p \rangle= -\frac1p\langle \nabla \cdot u(t),|Y_\lambda(t)\vert^p \rangle=0.$$
Then, for some $C>1$
\begin{align*}
    \frac{1}{p}\partial_t\| Y_\lambda\Vert_{p}^{p} &+\gamma \| Y_\lambda(t)\Vert_{p}^{p}  = \langle  (\lambda-\gamma)  Z_\lambda(t)- u(t)\cdot \nabla Z_\lambda(t), Y_\lambda(t) \cdot | Y_\lambda(t)\vert^{p-2}\rangle \\ &\leq  \|u(t)\Vert_p\|\nabla Z_\lambda(t)\Vert_\infty\|Y_\lambda(t)\Vert_p^{p-1} + |\lambda-\gamma\vert\cdot\|Z_\lambda(t)\Vert_p\cdot  \|Y_\lambda(t)\Vert_p^{p-1}\\
    &\leq \big[C(\|Y_\lambda(t)\Vert_p+\|Z_\lambda(t)\Vert_p)\|\nabla Z_\lambda(t)\Vert_\infty    + |\lambda-\gamma\vert\cdot\|Z_\lambda(t)\Vert_p \big] \|Y_\lambda(t)\Vert_p^{p-1}   \end{align*}
Here, we used Lemma \ref{Lemma:K Lp} in the last inequality. Since $\partial_t\| Y_\lambda\Vert_{p}^p= p \|Y_\lambda\Vert_{p}^{p-1}\cdot \partial_t\| Y_\lambda\Vert_{p}  $, we deduce that 
$$\partial_t\| Y_\lambda\Vert_{p} + \gamma \| Y_\lambda(t)\Vert_{p} \leq  C(\|Y_\lambda(t)\Vert_p+\|Z_\lambda(t)\Vert_p)\|\nabla Z_\lambda(t)\Vert_\infty   + |\lambda-\gamma\vert\cdot\|Z_\lambda(t)\Vert_p. $$ Thus, by the Sobolev inequality
$$\partial_t\| Y_\lambda\Vert_{p}\leq (C\|Z_\lambda(t)\Vert_{H^a}-\gamma )\cdot \| Y_\lambda(t)\Vert_{p}+ C( \|Z_\lambda(t)\Vert_{H^a}  + |\lambda-\gamma\vert) \cdot\|Z_\lambda(t)\Vert_{H^a} .$$ 
Applying Gr{\"o}nwall inequality, we have for $t>0$ and arbitrary $p>2$
\begin{align*}
\| Y_\lambda(t)\Vert_{p} &\leq \|w(0)\Vert_p\cdot e^{-\int_0^t (\gamma -C\|Z_\lambda(s)\Vert_{H^a}\;) ds } \\ &\ \ + \int_0^t C( \|Z_\lambda(s)\Vert_{H^a}  + |\lambda-\gamma\vert) \cdot\|Z_\lambda(s)\Vert_{H^a} \cdot  e^{-\int_s^t( \gamma -C\|Z_\lambda(r)\Vert_{H^a})\; dr } \;ds.
\end{align*}
Now, taking $p \rightarrow \infty$, the above inequality implies that  
\begin{align*}
\| Y_\lambda(t)\Vert_{\infty} &\leq \|w(0)\Vert_\infty\cdot e^{-\int_0^t (\gamma -C\|Z_\lambda(s)\Vert_{H^a}\;) ds } \\ &\ \ + \int_0^t C( \|Z_\lambda(s)\Vert_{H^a}  + |\lambda-\gamma\vert) \cdot\|Z_\lambda(s)\Vert_{H^a} \cdot  e^{-\int_s^t (\gamma -C\|Z_\lambda(r)\Vert_{H^a}\;) dr } \;ds.
\end{align*} 
Choosing $\lambda$ sufficiently large such that $\frac{CB}{\lambda}\leq\frac \gamma 2 \wedge \frac{CB}{\gamma} $. By (\ref{inequality:Z lambda}), we have $\mathbb{P}-a.s. $
\begin{equation}\label{inequality: w infty}
\begin{split}
   &\ \ \ \  \|w(t)\Vert_{\infty}\\&\leq  \| Y_\lambda(t)\Vert_{\infty} + \|Z_\lambda(t)\Vert_{\infty} \\ &\leq \|w(0)\Vert_\infty\cdot e^{-(\gamma-\frac{CB}{\lambda})t}  + \frac{\frac{C^2B}{\lambda} }{\gamma-\frac{CB}{\lambda}}\cdot (\frac{CB}{\lambda}+  \frac{4C^2B^2} {\gamma^2}  ) + \frac{CB}{\lambda} \\ &\leq \|w(0)\Vert_\infty\cdot e^{-\frac{\gamma t}{2}}  + \frac{\frac{CB}{\gamma} }{\gamma/2} (\frac{CB}{\gamma}+  \frac{2C^2B^2} {\gamma^2}  ) + \frac{CB}{\gamma} .    \end{split}
    \end{equation}
Then, we get the desired result by setting 
\begin{equation}\label{eqn:R}
    R(\gamma)= \frac{\frac{CB}{\gamma} }{\gamma/2} (\frac{CB}{\gamma}+  \frac{2C^2B^2} {\gamma^2}  ) + \frac{CB}{\gamma} .
    \end{equation}

\end{proof}
 We can now prove the main result of this section.

\begin{theorem}\label{Theorem:w bounded}
Let $\gamma\geq0$ and $2<p\leq\infty$. If $w_0\in W^{1,p}$, then for any
$T>0$,
\[
w\in L^\infty(0,T;W^{1,p})\cap C_{w^\star}([0,T];W^{1,p}),
\qquad \mathbb P\text{-a.s.}
\]
Here and throughout this theorem, for $2<p<\infty$,
$C_{w^\star}([0,T];W^{1,p})$ is understood as
$C_w([0,T];W^{1,p})$.

Moreover, for every $2<p\le\infty$, if $w_0\in W^{1,p}$, there exists
$\gamma^\star(w_0)>0$ such that, for every $\gamma\ge\gamma^\star$,
\[
\sup_{t\ge0}\|\nabla w(t)\|_{p}<\infty,
\qquad \mathbb P\text{-a.s.}
\]

\end{theorem}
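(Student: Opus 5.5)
Differentiating the vorticity equation \eqref{eqn:Euler 1} in space, the gradient $q:=\nabla w$ satisfies
\[
\partial_t q+\gamma q+(u\cdot\nabla)q=-(\nabla u)^{T}q+\nabla\eta ,
\]
where $u=\mathcal K\ast w$. We test this equation with $q|q|^{p-2}$ for $2<p<\infty$. Since $\nabla\cdot u=0$, the transport term drops out exactly as in the proof of Lemma \ref{lemma:w infity}. This gives
\[
\partial_t\|\nabla w\|_p+\gamma\|\nabla w\|_p\le \|\nabla u\|_\infty\|\nabla w\|_p+\|\nabla\eta\|_p .
\]
All of these manipulations will first be done on smooth approximations: mollified data and noise, or a Galerkin scheme. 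Theorem \ref{existence and uniqueness} gives existence and pathwise uniqueness in the $L^\infty$-vorticity class, so the bounds pass to the limit by lower semicontinuity. The same uniform bound, together with weak continuity of $w$ in $H$ (equivalently, distributional continuity in time), gives $w\in C_{w}([0,T];W^{1,p})$ for $p<\infty$. For $p=\infty$ it gives $C_{w^\star}([0,T];W^{1,\infty})$, because bounded sets of $W^{1,\infty}$ are weak$\star$ sequentially compact and the limit is identified through the $H$-continuity.

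The key point is the bound on $\|\nabla u\|_\infty$. The Biot–Savart operator is not bounded $L^\infty\to W^{1,\infty}$, but it is bounded from $W^{1,p}$, $p>2$, into $W^{1,\infty}$. Indeed $\nabla u=\nabla\nabla^\perp\Delta^{-1}w$ is a Calderón–Zygmund operator applied to $w$, so $\|\nabla^2 u\|_p\le C_p\|\nabla w\|_p$. Combining this with Sobolev–Morrey on the mean-zero torus gives
\[
\|\nabla u\|_\infty\le C_p\|\nabla w\|_p .
\]
For $p=\infty$ I would fix some $p_0\in(2,\infty)$ and use $\|\nabla w\|_{p_0}\le C\|\nabla w\|_\infty$. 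This yields a Riccati-type inequality
\[
y'\le -\gamma y + C y^2+ \|\nabla\eta(t)\|_p ,\qquad y=\|\nabla w\|_p .
\]
For finite $T$ this alone could blow up, so for the first claim I would use the sharper logarithmic (Beale–Kato–Majda / Brezis–Gallouet) estimate
\[
\|\nabla u\|_\infty\le C\|w\|_\infty\bigl(1+\log(1+\|\nabla w\|_p)\bigr)+C .
\]
Here $\|w\|_\infty$ is bounded on $[0,T]$ by Lemma \ref{lemma:w infity}, and the bound holds for any $\gamma\ge0$ by the same Grönwall argument with finite-time constants. This gives $y'\le C(1+y)\log(2+y)+\|\nabla\eta\|_p$, and a double-exponential Grönwall bound follows on $[0,T]$. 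This proves the finite-time part; the case $p=\infty$ follows by letting $p\to\infty$ in the bounds, which are uniform in $p$.

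For the global-in-time bound with large $\gamma$ I would use the logarithmic estimate again. Lemma \ref{lemma:w infity} gives $\|w(t)\|_\infty\le \|w_0\|_\infty e^{-\gamma t/2}+R(\gamma)$, with $R(\gamma)=O(1/\gamma)$. Hence
\[
y'\le -\gamma y + C\bigl(\|w_0\|_\infty e^{-\gamma t/2}+R(\gamma)+1\bigr)(1+y)\log(2+y) +\|\nabla\eta\|_p .
\]
Since $\log(2+y)$ grows slower than any power, the damping $-\gamma y$ dominates for large $y$, but the threshold must be controlled. I would do this with an invariant-region argument: let $M:=2\max(\|\nabla w_0\|_p,1)$. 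While $y\le M$, the inequality is linear, $y'\le-(\gamma-C_M)y+C_M+\|\nabla\eta\|_p$ with $C_M\sim(\|w_0\|_\infty+1)\log(2+M)$. The forcing term is only $L^2$ in time on unit intervals and is bounded by $B$ through \eqref{eqn:noise H norm}, since $a>3$ embeds $H^a$ into $W^{1,\infty}$. So, exactly as in \eqref{inequality:Z lambda}, its accumulated contribution with decay rate $\gamma/2$ is at most $CB/\sqrt\gamma$. Choosing $\gamma^\star(w_0)$ so that $C_M\le\gamma/2$ and $\|\nabla w_0\|_p+ (C_M+CB)/\sqrt{\gamma}<M$, a continuity argument shows that $y$ never reaches $M$. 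This gives $\sup_{t\ge0}\|\nabla w(t)\|_p\le M$ almost surely, uniformly since $B$ is deterministic.

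The main obstacle is this large-$\gamma$ step: closing the nonlinear bootstrap uniformly in time when the noise is only bounded in $L^2$ on each unit interval, not pointwise. If the direct version is awkward, I would first subtract the Ornstein–Uhlenbeck part $Z_\lambda$ as in Lemma \ref{lemma:w infity}, which is smooth with $\sup_t\|Z_\lambda\|_{H^a}\le CB/\lambda$. I would then run the argument for $\nabla Y_\lambda$, where the forcing becomes pointwise bounded in time.
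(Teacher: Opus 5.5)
Your proposal is correct. For the global-in-time part it takes a genuinely different and simpler route than the paper.

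\textbf{The paper's route.} The paper subtracts the Ornstein--Uhlenbeck process $Z_\lambda$ (with $|\lambda-\gamma|\le 1$), so that the forcing in the equation for $\nabla Y_\lambda$ is bounded pointwise in time. It then passes to the logarithmic variable $X_{p^\star}=\log_+\|\nabla Y_\lambda\|_{p^\star}$, which satisfies a \emph{linear} inequality with coefficient $C\|w\|_\infty$. The argument then has two phases. First, an integral Gr{\"o}nwall bound with the explicit drift $\alpha(t)\to-\infty$ shows that $\|\nabla Y_\lambda\|_{p^\star}$ stays bounded and drops below $1$ at some time $t_2$. Second, after $T_0$, when also $\|w\|_\infty\le CR$, a re-entry argument keeps it below $1$. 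Finally, the global $W^{1,p^\star}$ bound linearises $\|\nabla u\|_\infty$, which gives a $p$-uniform inequality and the case $p=\infty$.

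\textbf{Your route.} You estimate $\nabla w$ directly and absorb the $L^2$-in-time noise by Cauchy--Schwarz on unit intervals. You then run a single barrier argument from $t=0$ at level $M=2\max(\|\nabla w_0\|_p,1)$. Below $M$ the logarithm is frozen at $\log(2+M)$, so the inequality is linear. Once $\gamma$ dominates $C(\|w_0\|_\infty+R+1)\log(2+M)$ and $B/\sqrt{\gamma}$ is small, $\|\nabla w\|_p$ cannot reach $M$.

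\textbf{What each buys.} Your route avoids the OU decomposition and the $t_1,M_2,C_1,\kappa$ bookkeeping, and it gives an explicit deterministic bound $\sup_{t\ge0}\|\nabla w(t)\|_p\le M$. The price is the threshold. Because you use $\sup_t\|w(t)\|_\infty$ rather than its time integral, you get $\gamma^\star$ of order $(\|w_0\|_\infty+1)\log(2+\|\nabla w_0\|_p)+B^2$. The paper's log-variable Gr{\"o}nwall only sees $\int_0^\infty\|w_0\|_\infty e^{-\gamma s/2}\,ds=2\|w_0\|_\infty/\gamma$, and this is how it arrives at the sublinear threshold \eqref{def:gamma} recorded in the Remark after the theorem. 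Your route proves the theorem, but not that Remark.

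\textbf{Three small points.}

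First, drop the Galerkin option. Spectral truncation destroys the cancellation $\int (u\cdot\nabla q)\cdot q|q|^{p-2}\,d\xi=0$ for $p\neq 2$, and also the $L^\infty$ bound on the vorticity. Justify the computations only through mollified data and noise: use global smooth solutions, and identify the limit via the uniqueness in Theorem \ref{existence and uniqueness}(iii).

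Second, for $p=\infty$ in the global statement, add one line. Once $\sup_t\|\nabla w(t)\|_{p_0}\le M$ for a fixed $p_0>2$, the logarithmic estimate gives $\sup_t\|\nabla u(t)\|_\infty\le C_M\le\gamma/2$. Hence $\|\nabla w(t)\|_p\le\|\nabla w_0\|_p+CB/\sqrt{\gamma}$ uniformly in $p\ge p_0$, and you can let $p\to\infty$.

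Third, your rate $CB/\sqrt{\gamma}$ is the one that \eqref{eqn:noise H norm} actually gives, since the unit interval preceding $t$ can by itself contribute of order $B/\sqrt{\gamma}$. So it is not ``exactly as in'' \eqref{inequality:Z lambda}, which asserts $CB/\lambda$. This is harmless because you only need smallness, and the same correction applies to $\sup_t\|Z_\lambda\|_{H^a}$ in your fallback.
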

\begin{proof}
We start from equation (\ref{eqn:Y}). Throughout this proof, we fix \(\lambda\ge1\) satisfying
\[
|\lambda-\gamma|\le1.
\]
Taking the gradient over both sides of the equation (\ref{eqn:Y}), multiplying by $\nabla Y_{\lambda}(t) \cdot |\nabla Y_{\lambda}(t)\vert^{p-2} $, $p>2$ and then integrating over $\mathbb{T}^2$; we get
\begin{align}\label{eqn:gradient Y}
    \frac{1}{p}\partial_t\|\nabla Y_{\lambda}\Vert_{p}^{p}+ \gamma \|\nabla Y_{\lambda}(t)\Vert_{p}^{p}&= -\langle \nabla[u(t)\cdot \nabla Y_{\lambda}(t)]+ \nabla[u(t)\cdot \nabla Z_{\lambda}(t)], \nabla Y_{\lambda}(t) \cdot |\nabla Y_{\lambda}(t)\vert^{p-2}\rangle \nonumber\\&\ \  + \langle (\lambda-\gamma) \nabla Z_{\lambda}(t),\nabla Y_{\lambda}(t) \cdot |\nabla Y_{\lambda}(t)\vert^{p-2}  \rangle.
 \end{align}
Note that 
\begin{align*}
    \langle \nabla[u(t)\cdot \nabla Y_{\lambda}(t)],\nabla Y_{\lambda}(t) \cdot |\nabla Y_{\lambda}(t)\vert^{p-2}\rangle &=  \langle \nabla u(t)\cdot \nabla Y_{\lambda}(t),\nabla Y_{\lambda}(t) \cdot |\nabla Y_{\lambda}(t)\vert^{p-2}\rangle \\&\ \ +\sum_{j=1}^2 \sum_{i=1}^2 \langle u_j(t)\cdot\partial_i\partial_jY_{\lambda}(t),\partial_iY_{\lambda}(t) \cdot |\nabla Y_{\lambda}(t)\vert^{p-2} \rangle\\&:=I_1(t)+I_2(t).
\end{align*}
By integration by parts, we have
\begin{align*}
    &\ \ \ \ I_2(t)\\&= -\sum_{i=1}^2\langle\sum_{j=1}^2\partial_ju_j(t)\cdot \partial_i Y_{\lambda}(t),\partial_iY_{\lambda}(t) \cdot |\nabla Y_{\lambda}(t)\vert^{p-2} \rangle \\&\ \ - \sum_{i,j}\langle u_j(t)\cdot \partial_iY_{\lambda}(t), \partial_j[\partial_iY_{\lambda}(t) \cdot |\nabla Y_{\lambda}(t)\vert^{p-2} ] \rangle\\&=0-\sum_{i,j}\langle u_j(t)\cdot \partial_iY_{\lambda}(t), \partial_j\partial_iY_{\lambda}(t) \cdot |\nabla Y_{\lambda}(t)\vert^{p-2}  \rangle\\&\ \ - \sum_{i,j}\langle u_j(t)\cdot \partial_iY_{\lambda}(t), \partial_iY_{\lambda}(t) \cdot \partial_j[|\nabla Y_{\lambda}(t)\vert^{p-2}]  \rangle \\&= -I_2(t)-\sum_{j=1}^2 \langle u_j(t)\cdot |\nabla Y_{\lambda}(t)\vert^2,   \partial_j[|\nabla Y_{\lambda}(t)\vert^{p-2}]  \rangle = -(p-1)I_2(t).  \end{align*}
    Thus, $I_2(t)\equiv0$.
    Moreover, $$|I_1(t)\vert=| \langle \nabla u(t)\cdot \nabla Y_{\lambda}(t),\nabla Y_{\lambda}(t) \cdot |\nabla Y_{\lambda}(t)\vert^{p-2}\rangle\vert \leq C\|\nabla u(t)\Vert_{L^\infty}\|\nabla Y_{\lambda}(t)\Vert^{p}_{p}. $$
    Next, we need an estimate for $\|\nabla u\Vert_p$. We fix $2<p^\prime<p $. Then, by Lemma 3.1 of \cite{kato1986} and the Sobolev inequality, one obtains that for some $1+\frac{2}{p^\prime}<s<2 $,
    $$\|\nabla u(t)\Vert_p \leq C_{p^\prime}\|\nabla u(t)\Vert_{W^{s-1,p^\prime}} \leq C_{p^\prime}\|w(t)\Vert_{W^{s-1,p^\prime}}\leq C_{p^\prime}\|\nabla w(t)\Vert_{p^\prime} \leq C \|\nabla w(t)\Vert_{p}. $$
    By H{\"o}lder inequality and Young inequality, we have
    \begin{align*}
        &\ \ \ \ |\langle\nabla[u(t)\cdot \nabla Z_{\lambda}(t)], \nabla Y_{\lambda}(t) \cdot |\nabla Y_{\lambda}(t)\vert^{p-2}\rangle \vert\\ &\leq C\|Z_{\lambda}(t)\Vert_{W^{2,\infty} }\|\nabla u(t)\Vert_{p}\|\nabla Y_{\lambda}(t)\Vert_{p}^{p-1} \\ &\leq C\|Z_{\lambda}(t)\Vert_{W^{2,\infty} }\|\nabla  w(t)\Vert_{p}\|\nabla Y_{\lambda}(t)\Vert_{p}^{p-1} \\ & \leq C\|Z_{\lambda}(t)\Vert_{W^{2,\infty} }(\|\nabla Y_{\lambda}(t)\Vert_{p}+\|\nabla Z_{\lambda}(t)\Vert_{p} )\|\nabla Y_{\lambda}(t)\Vert_{p}^{p-1}.
        \end{align*} 
        Similarly, we have \begin{align*}| \langle (\lambda-\gamma) \nabla Z_{\lambda},\nabla Y_{\lambda}(t) \cdot |\nabla Y_{\lambda}(t)\vert^{p-2}  \rangle \vert &\leq |\lambda-\gamma\vert\cdot\|Z_{\lambda}(t)\Vert_{W^{1,p}}\cdot \|\nabla Y_{\lambda}(t)\Vert_{p}^{p-1}. 
        \end{align*}
Again, by Lemma 3.1 of \cite{kato1986}, we have the estimate for $\|\nabla u(t)\Vert_{\infty}$,
\begin{equation}\label{estimate:nable u}
    \|\nabla u(t)\Vert_{L^\infty}\leq C_p\|w(t)\Vert_{L^\infty} \left[1+ \log\left(1+\frac{\|\nabla w(t)\Vert_{p}}{\|w(t)\Vert_{L^\infty}}\right) \right].
\end{equation}
Since $\partial_t\|\nabla Y_{\lambda}\Vert_{p}^{p}= p \|\nabla Y_{\lambda}\Vert_{p}^{p-1}\cdot \partial_t\|\nabla Y_{\lambda}\Vert_{p}  $, by (\ref{eqn:gradient Y}), the Sobolev inequality and the above estimates, we obtain for $ t \in [0,T]$ and $a>3$,
\begin{align*}
   &\ \ \ \  \partial_t\|\nabla Y_{\lambda}\Vert_{p}+ \gamma \|\nabla Y_{\lambda}(t)\Vert_{p}\\ &\leq C_p\left(\|w(t)\Vert_{L^\infty} \left[1+ \log\left(1+\frac{\|\nabla w(t)\Vert_{p}}{\|w(t)\Vert_{L^\infty}}\right) \right]+ \|Z_{\lambda}(t)\Vert_{H^a}  \right)\cdot \|\nabla Y_{\lambda}(t)\Vert_{p} \\&\ \ + C (\|Z_{\lambda}(t)\Vert_{H^a }^{2}
   + |\lambda-\gamma\vert\cdot \|Z_{\lambda}(t)\Vert_{H^a } ). 
    \end{align*}

Since 
$\log(x+y)\leq \log_+(x+y) \leq \log2+\log_+x+\log_+y$ for $x,y>0 $ and $C\geq 1$, we get
\begin{equation}
\begin{split}\label{inequality: Y differential}
   &\ \ \ \  \partial_t\|\nabla Y_{\lambda}\Vert_{p}\\ &\leq \left(-\gamma+C_p\|w(t)\Vert_{L^\infty} \left[1+ \log\left(1+\frac{\|\nabla w(t)\Vert_{p}}{\|w(t)\Vert_{L^\infty}}\right) \right]+ C_p\|Z_{\lambda}(t)\Vert_{H^a}  \right)\cdot \|\nabla Y_{\lambda}(t)\Vert_{p} \\&\ \ + C \|Z_{\lambda}(t)\Vert_{H^a }^{2} \\ &\leq \Bigg[ -\gamma + C_p\|w(t)\Vert_{L^\infty} \left[1+ \log_+\left(\|\nabla Y_{\lambda}(t)\Vert_{p}\right) + \log_+(\|\nabla Z_{\lambda}(t)\Vert_{p} ) + \log_+(\frac{1}{\|w(t)\Vert_{L^\infty} }) \right] \\&\ \  +C_p \|Z_{\lambda}(t)\Vert_{H^a }\Bigg]\cdot \|\nabla Y_{\lambda}(t)\Vert_{p} + C (\|Z_{\lambda}(t)\Vert_{H^a }^{2}
   + |\lambda-\gamma\vert\cdot \|Z_{\lambda}(t)\Vert_{H^a } ) .
   \end{split}
    \end{equation}
    
Setting $X_p(t)= \log(\|\nabla Y_{\lambda}(t)\Vert_{p} \vee 1 )= \log_+ (\|\nabla Y_{\lambda}(t)\Vert_{p}) $, we have
\begin{equation}\label{eqn:X}
    \partial_tX_p = \frac{1}{\|\nabla Y_{\lambda}(t)\Vert_{p} }\chi_{\{\|\nabla Y_{\lambda}(t)\Vert_{p}\geq 1\} } \partial_t \|\nabla Y_{\lambda}\Vert_{p} .\end{equation}
By substituting (\ref{eqn:X}) into (\ref{inequality: Y differential}) and the Sobolev embedding, we obtain \begin{align*}
    \partial_t X_p &\leq \Bigg[ -\gamma + C_p\|w(t)\Vert_{L^\infty} \left[1 + \log_+(\|Z_{\lambda}(t)\Vert_{H^a} ) + \log_+(\frac{1}{\|w(t)\Vert_{L^\infty} }) \right] \\&\ \  +C_p \|Z_{\lambda}(t)\Vert_{H^a } + C_p\|w(t)\Vert_{L^\infty}\cdot X_p(t) +C \|Z_{\lambda}(t)\Vert_{H^a }^{2} \Bigg]\cdot \chi_{\{\|\nabla Y_{\lambda}(t)\Vert_{p}\geq 1\}} . \end{align*}
Since \(X_p(t)=\log(\|\nabla Y_{\lambda}(t)\|_p\vee1)\) is identically zero on the set where \(\|\nabla Y_{\lambda}(t)\|_p\le1\), it suffices to estimate \(X_p\) on the set where \(\|\nabla Y_{\lambda}(t)\|_p\geq1\). All the following differential inequalities are understood to hold for a.e. \(t\) on this set. 

We take $p=p^\star$ for some $p^\star>2$ fixed once and for all, by Gr{\"o}nwall inequality,
\begin{equation}\label{inequality:Gronwall Y}
    X_{p^\star}(t) \leq  \Bigg[ \alpha(t)+C_{p^\star}\int_0^t \alpha(s)  \|w(s)\Vert_{\infty}e^{C_{p^\star} \int_s^t\|w(r)\Vert_{\infty}\;dr } \;ds\Bigg]\vee 0 ,   
    \end{equation}
    where 
    \begin{equation}\label{def:alpha}
    \begin{split}
 \alpha(t)&= \log(\|\nabla w_0\Vert_{p^\star}\vee 1 )+ \int_0^t \Bigg[-\gamma+ C_{p^\star}\big[\|w(s)\Vert_{L^\infty}  +\|w(s)\Vert_{L^\infty}\cdot \log_+(\| Z_{\lambda}(s)\Vert_{H^a})  \\&\ \  + \|Z_{\lambda}(s)\Vert_{H^a }+ \|w(s)\Vert_{L^\infty}\cdot \log_+(\frac{1}{\|w(s)\Vert_{L^\infty} }) \big]+C (\|Z_{\lambda}(t)\Vert_{H^a }^{2}
   + |\lambda-\gamma\vert\cdot \|Z_{\lambda}(t)\Vert_{H^a } )\Bigg] \;ds.
    \end{split}    
        \end{equation}
Since $-x\log x \leq \frac{1}{e}$ for $x,y>0 $, it follows from (\ref{inequality:Gronwall Y}), (\ref{def:alpha}), Theorem \ref{existence and uniqueness} and Lemma \ref{lemma:w infity} that $w \in L^\infty(0,T;W^{1,p^\star})$. Then, we can obtain the uniform estimate for $\|\nabla Y_{\lambda}\Vert_{p}$ with respect to all $p>2$ by substituting $\|\nabla w(t)\Vert_{p^\star}$ into (\ref{estimate:nable u}). By the same computations as above, since $x\log(1+\frac{y}{x})\leq y$ for $x,y >0$, we have that for fixed $p^\star>2$ and any $p \geq p^\star$,
    \begin{equation}\label{inequality:Y W1,p}  
    \begin{split}
    \partial_t\|\nabla Y_{\lambda}(t)\Vert_{p} &\leq \Big(-\gamma+ C_{p^\star}[\|w(s)\Vert_{L^\infty} + \|\nabla w(s)\Vert_{p^\star} \\&\ \ + \|Z_{\lambda}(s)\Vert_{H^a }]  \Big)\cdot \|\nabla Y_{\lambda}(t)\Vert_{p} + C (\|Z_{\lambda}(t)\Vert_{H^a }^{2}
   + |\lambda-\gamma\vert\cdot \|Z_{\lambda}(t)\Vert_{H^a } ).\end{split}   \end{equation}
    Taking $p \rightarrow \infty$, 
    
    \begin{equation}\label{inequality:Y W1,infty}
    \begin{split}
         \partial_t\|\nabla Y_{\lambda}(t)\Vert_{\infty} &\leq \Big(-\gamma+ C_{p^\star}[\|w(s)\Vert_{L^\infty} + \|\nabla w(s)\Vert_{p^\star} \\&\ \ + \|Z_{\lambda}(s)\Vert_{H^a }]  \Big)\cdot \|\nabla Y_{\lambda}(t)\Vert_{\infty} +C (\|Z_{\lambda}(t)\Vert_{H^a }^{2}
   + |\lambda-\gamma\vert\cdot \|Z_{\lambda}(t)\Vert_{H^a } ).     \end{split}
        \end{equation}
    Hence, we get $w \in L^\infty(0,T;W^{1,\infty})$. Going back to equation (\ref{eqn:Y}) and using the regularity of $w$, we get that $\mathbb{P}-a.s.$, $w \in H^{1,2}(0,T;L^2)$. By Lemma 1.4 of Chapter 3 in \cite{temam2024navier}, we also have $w \in C_{w^\star}([0,T];W^{1,\infty}),\; \mathbb{P}-a.s. $ 

    Next, we will show that for any $w_0 \in W^{1, p}$ there exists $\gamma*>0$ such that for any $\gamma \geq \gamma^\star$, $2<p \leq \infty$,
$$\sup_{t\geq0}\|\nabla w(t)\Vert_{p}< \infty,\; \mathbb{P}-a.s. $$ 
We first prove the estimate for a fixed finite exponent $p^\star>2$.
This bound will then be used to control the logarithmic term in the
$W^{1,\infty}$ estimate.
According to Lemma \ref{lemma:w infity}, for any initial value $w_0 \in W^{1,p^\star}\subset L^\infty $, there exist $R(\gamma)$ that are decreasing with respect to $\gamma$ such that (\ref{inequality:R}) holds  $\mathbb{P}-a.s.$.  
Recall that for any $\gamma >0$, we set $\gamma\geq 1$ with $|\lambda-\gamma|\leq1 $. 
Therefore, for any $ \gamma >0$, 
$$ \sup_{t \geq 0}\|Z_{\lambda}(t)\Vert_{H^a } \leq \frac{CB}{|\gamma-1|} \wedge CB:= \bar{R}(\gamma) .$$
Thus, there exists $M_1>0$ depending on $\gamma$ such that for all $t\geq 0$, $\mathbb{P}-a.s. $
 \begin{equation}\label{inequality:C1}
 \begin{split}
    F(t) = C_{p^\star}( \|Z_{\lambda}(t)\Vert_{H^a }&+ \|w(t)\Vert_{L^\infty}\cdot \log_+(\frac{1}{\|w(t)\Vert_{L^\infty} })  )\\&+ C (\|Z_{\lambda}(t)\Vert_{H^a }^{2}
   + |\lambda-\gamma\vert\cdot \|Z_{\lambda}(t)\Vert_{H^a } ) \leq M_1(\gamma). 
   \end{split}
\end{equation}
Then, by (\ref{def:alpha}), we have $\mathbb{P}-a.s.$
\begin{equation}\label{inequality:alpha 1}
\begin{split}
    \alpha(t) &\leq \log_+(\|\nabla w_0\Vert_{p^\star}) -(\gamma-M_1)t + C_{p^\star}(1+\log_+\bar{R}) \int_0^t \|w(s)\Vert_{L^\infty}\;ds\\&
    \leq \log_+(\|\nabla w_0\Vert_{p^\star})- (\gamma-M_1)t + C_{p^\star}(1+\log_+\bar{R}) \int_0^t \|w_0\Vert_{L^\infty}\cdot e^{-\frac{\gamma s}{2}} +R\;ds\\&
    \leq \log_+(\|\nabla w_0\Vert_{p^\star})+ \frac{2 C_{p^\star}(1+\log_+\bar{R}) \|w_0\Vert_{L^\infty}}{\gamma} - (\gamma-C_{p^\star}R(\log_+\bar{R} +1) -M_1)t 
    \end{split}
\end{equation}
Recall that $R(\gamma), \bar{R}(\gamma)$ and $ M_1(\gamma)$ are decreasing in $\gamma$ from (\ref{eqn:R}). Thus,
we are able to set $ \gamma > C_{p^\star}R(\log_+\bar{R}+1) +M_1 $. Therefore,
\begin{equation}\label{inequality:alpha 2}
   \lim_{t \rightarrow \infty}\alpha(t)=-\infty,\; \sup_{t \geq0}\alpha(t) < \log_+(\|\nabla w_0\Vert_{p^\star})+ \frac{2 C_{p^\star}(1+\log_+\bar{R}) \|w_0\Vert_{L^\infty}}{\gamma} .
   \end{equation} 
Hence, for any $M_2> 0$, there exists $$t_1 = \frac{\log_+(\|\nabla w_0\Vert_{p^\star})+ \frac{2 C_{p^\star}(1+\log_+\bar{R}) \|w_0\Vert_{L^\infty}}{\gamma}+M_2}{\gamma-C_{p^\star}R(\log_+\bar{R}+1) -M_1 }. $$ such that $\alpha(t)  \leq -M_2$ for all $ t\geq t_1, \;\mathbb{P}-a.s.$.
Therefore, we get for any $t\geq t_1$,
\begin{align}\label{estimate:Y R4}
   &\ \ \ \  C_{p^\star}\int_0^t \alpha(s)  \nonumber \|w(s)\Vert_{L^\infty}e^{C_{p^\star} \int_s^t\|w(r)\Vert_{L^\infty}\;dr } \;ds\\ &= C_{p^\star}\int_0^{t_1} \alpha(s)  \|w(s)\Vert_{L^\infty}e^{C_{p^\star} \int_s^t\|w(r)\Vert_{L^\infty}\;dr } \;ds    + C_{p^\star}\int_{t_1}^t \alpha(s)  \|w(s)\Vert_{L^\infty}e^{C_{p^\star} \int_s^t\|w(r)\Vert_{L^\infty}\;dr } \;ds \nonumber \\ &\leq \sup_{t\geq 0}\alpha(t)\int_0^{t_1}  C_{p^\star}\|w(s)\Vert_{L^\infty}e^{C_{p^\star} \int_s^t\|w(r)\Vert_{L^\infty}\;dr } \;ds  - M_2\int_{t_1}^t   C_{p^\star}\|w(s)\Vert_{L^\infty}e^{C_{p^\star} \int_s^t\|w(r)\Vert_{L^\infty}\;dr } \;ds \nonumber \\ &\leq \sup_{t\geq 0}\alpha(t) \left(e^{C_{p^\star} \int_0^t\|w(s)\Vert_{L^\infty}\;ds }- e^{C_{p^\star} \int_{t_1}^t\|w(s)\Vert_{L^\infty}\;ds } \right)  -M_2\left( e^{C_{p^\star} \int_{t_1}^t\|w(s)\Vert_{L^\infty}\;ds }-1 \right) \nonumber \\& \leq \left[\sup_{t\geq 0}\alpha(t)(e^{C_{p^\star}\int_0^{t_1}(\| w_0\Vert_{\infty}\cdot e^{-\frac{\gamma s}{2}}+R)\;ds }-1)-M_2 \right]e^{C_{p^\star} \int_{t_1}^t\| w(s)\Vert_{\infty}\;ds } +M_2 \nonumber\\
   &\leq \Bigg[\left(\log_+(\|\nabla w_0\Vert_{p^\star})+ \frac{2 C_{p^\star}(1+\log_+\bar{R}) \|w_0\Vert_{L^\infty}}{\gamma}\right)\nonumber \\&\ \ \ \ \ \ \ \  \times (e^{\frac{2C_{p^\star}\| w_0\Vert_{\infty}}{\gamma}+ C_{p^\star} Rt_1 } -1)  -M_2 \Bigg]e^{C_{p^\star} \int_{t_1}^t\| w(s)\Vert_{\infty}\;ds } + M_2  . \end{align}
Here, we used (\ref{inequality:alpha 2}) in the last inequality and  (\ref{inequality:R}) in the third inequality.

For any $w_0 \in W^{1,p^\star}$, we choose $$M_2=C_1 \left(\log_+(\|\nabla w_0\Vert_{p^\star})+  \|w_0\Vert_{L^\infty}\right), $$ 
with $C_1$ sufficiently large. By the expression of $t_1$, for any bounded initial value $1\leq\|\nabla w_0\Vert_{p^\star} \leq C_0$, one can take $\gamma$ sufficiently large such that 
\begin{equation}\label{def:C1}
    (e^{\frac{2C_{p^\star}\| w_0\Vert_{\infty}}{\gamma}+ C_{p^\star} Rt_1 } -1) (1+ \frac{2 C_{p^\star}(1+\log_+\bar{R}) \|w_0\Vert_{L^\infty}}{\gamma} )\leq C_1.\end{equation}
In fact, we can set 
\begin{equation}\label{def:gamma}
    \gamma\geq \kappa(\log_+(\|\nabla w_0\Vert_{p^\star})\vee  \|w_0\Vert_{L^\infty} )+ C_\kappa+ C_{p^\star}R(\log_+\bar{R}+1) +M_1:=\bar{\gamma}, \end{equation}
where $\kappa>0$ is arbitrary. Here, $C_\kappa$ does not depend on $w_0$. Notice that the $R(\gamma)=O(\gamma^{-1} )$. Hence, 
$$C_{p^\star}t_1R \leq \frac{C_{p^\star}(1+C_1)}{\kappa}\cdot C_{\kappa}^{-1}, $$
and
\begin{align*}
   &\ \ \ \ (e^{\frac{2C_{p^\star}\| w_0\Vert_{\infty}}{\gamma}+ C_{p^\star} Rt_1 } -1) (1+ \frac{2 C_{p^\star}(1+\log_+\bar{R}) \|w_0\Vert_{L^\infty}}{\gamma} )\\ & \leq \left[ \exp{\left(\frac{2C_{p^\star}}{\kappa}+\frac{C_{p^\star}(1+C_1)}{\kappa C_\kappa}\right)-1 } \right](1+\frac{2C_{p^\star}}{\kappa})\\& \leq  \exp\left(\frac{2C_{p^\star}}{\kappa} \right) (1+\frac{2C_{p^\star}}{\kappa}) \exp \left(\frac{C_{p^\star}(1+C_1)}{\kappa C_\kappa}\right).
   \end{align*}
Taking $C_1= 2 \exp\left(\frac{2C_{p^\star}}{\kappa} \right) (1+\frac{2C_{p^\star}}{\kappa}) $ and $C_{\kappa}\geq \frac{C_{p^\star}(1+C_1)}{\kappa \ln 2} $, one obtains (\ref{def:C1}).
Then, (\ref{inequality:alpha 1}) and (\ref{estimate:Y R4}) yield that $\lim_{t\rightarrow\infty}\alpha(t)= -\infty$ and $$ \lim_{t\rightarrow\infty} \alpha(t)+C_{p^\star}\int_0^t \Big[\alpha(s)  \|w_\epsilon(s)\Vert_{\infty}e^{C_{p^\star} \int_s^t\|w_\epsilon(r)\Vert_{\infty}\;dr }\Big] \;ds=-\infty.  $$ 
By \eqref{inequality:Gronwall Y} and \eqref{inequality:C1}, there exists
\(t_2>0\) such that
\[
\|\nabla Y_\lambda(t_2)\|_{p^\star}\le 1,
\qquad
\sup_{t\in[0,t_2]}\|\nabla Y_\lambda(t)\|_{p^\star}<\infty .
\]
By Lemma \ref{lemma:w infity}, there exists \(t_3>0\), depending on
\(\|w_0\|_\infty\), such that
\[
\sup_{t\ge t_3}\|w(t)\|_\infty \le C R .
\]
Set \(T_0:=\max\{t_2,t_3\}\). If
\[
\|\nabla Y_\lambda(t)\|_{p^\star}<1
\qquad\text{for all }t\ge T_0,
\]
then the desired bound follows immediately. Otherwise, let
\[
\tau:=\inf\{t\ge T_0:\|\nabla Y_\lambda(t)\|_{p^\star}\ge1\}.
\]
By continuity, \(\|\nabla Y_\lambda(\tau)\|_{p^\star}=1\).

We now apply the differential inequality on every interval
\([a,b]\subset[\tau,\infty)\) on which
\[
\|\nabla Y_\lambda(t)\|_{p^\star}\ge1 .
\]
Since \(a\ge T_0\), we have \(\|w(t)\|_\infty\le CR\) for all \(t\in[a,b]\).
By the same argument as above, the coefficient \(\alpha(t)\) is non-positive on
\([a,b]\). Therefore
\[
\|\nabla Y_\lambda(t)\|_{p^\star}\le 1
\qquad\text{for all }t\in[a,b].
\]
Consequently,
\[
\sup_{t\ge T_0}\|\nabla Y_\lambda(t)\|_{p^\star}\le 1 .
\]
Using the uniform bound on \(Z_\lambda\), we obtain
\[
\sup_{t\ge T_0}\|\nabla w(t)\|_{p^\star}
\le 1+\bar R .
\]
Together with the bound on \([0,T_0]\), this gives
\[
\sup_{t\ge0}\|\nabla w(t)\|_{p^\star}<\infty .
\]    
By increasing $\bar\gamma$ if necessary, we may assume that for every
$\gamma\geq\bar\gamma$,
\[
\gamma>
C_{p^\star}(1+R(\gamma)+\bar R).
\]
We then set
\[
\gamma^\star:=\bar\gamma.
\]

Having obtained the global $W^{1,p^\star}$ bound, we now derive the
$W^{1,\infty}$ estimate. For any $\gamma>\gamma^\star$,
$$-\gamma+ C_{p^\star}[\|w(t)\Vert_{L^\infty} + \|\nabla w(t)\Vert_{p^\star} + \|Z_{\lambda}(t)\Vert_{H^a }] <-\sigma<0,\; t\geq t_2 ,$$ for some $\sigma>0$.
According to (\ref{inequality:Y W1,p}) and (\ref{inequality:Y W1,infty}), this implies $$ \sup_{t\geq0}\|\nabla w(t)\Vert_{p} < \infty,\; \mathbb{P}-a.s. $$ for $2<p\leq\infty$. It is clear from the construction that $\gamma^\star=\gamma^\star(w_0)$.

\end{proof}

\begin{remark}
From (\ref{def:gamma}), for every $\kappa>0$ there exists a constant
$C_\kappa$, independent of $w_0$, such that
\[
\gamma^\star
\le
\kappa\left(
\log_+\|\nabla w_0\|_{p^\star}
\vee
\|w_0\|_{L^\infty}
\right)
+C_\kappa.
\]
Consequently,
\[
\gamma^\star
=
o\!\left(
\log_+\|\nabla w_0\|_{p^\star}
\vee
\|w_0\|_{L^\infty}
\right)
\]
as
\[
\log_+\|\nabla w_0\|_{p^\star}
\vee
\|w_0\|_{L^\infty}\to\infty.
\]
\end{remark}
 The global-in-time bound for $W^{1,p}, 2<p<\infty$ norm also holds for any $\gamma>0$ if the noise intensity is small.
We denote by $w_\epsilon$ the solution to the following equation
\begin{equation}\label{eqn:Euler 2}
\begin{cases}
     d w_\epsilon(t)+\gamma w_\epsilon dt=(\mathcal{K}\ast w_\epsilon)\cdot \nabla w_\epsilon dt+ \epsilon d \eta(t),\\
     u_\epsilon= \mathcal{K}\ast w_\epsilon,\; \nabla\cdot u_\epsilon=0,\\     w_\epsilon(0)=w_0.
    \end{cases}
\end{equation}
\begin{corollary}\label{corollary:boundedness}
    Let $\gamma>0$ and $2<p \leq \infty$, there exists $\epsilon^\star(\gamma,w_0)>0$ such that for any $0\leq \epsilon < \epsilon^\star $
 \begin{equation}
     \sup_{t\geq0}\|\nabla w_\epsilon(t)\Vert_{p} < \infty,\; \mathbb{P}-a.s.. \end{equation}
\end{corollary}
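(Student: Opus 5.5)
The plan is to repeat the proof of Theorem \ref{Theorem:w bounded} verbatim, with the forcing $\eta$ replaced by $\epsilon\eta$. In that proof the damping $\gamma$ played two roles. First, it made the Ornstein--Uhlenbeck part small, since $\sup_t\|Z_\lambda\|_{H^a}\lesssim B/\lambda$. Second, it made the asymptotic $L^\infty$ level $R(\gamma)$ small enough that $\gamma$ dominates $C_{p^\star}R(\log_+\bar R+1)+M_1$. When the noise is multiplied by $\epsilon$, both smallness requirements can be met by sending $\epsilon\to0$ with $\gamma>0$ fixed. No conditions on $\lambda$ are needed beyond $\lambda\ge 1$ and $|\lambda-\gamma|\le 1$, fixed once and for all.

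\textbf{Step 1: bounds on the OU part and on $\|w_\epsilon\|_\infty$.} Define $Z^\epsilon_\lambda=\epsilon Z_\lambda$, so that (\ref{inequality:Z lambda}) gives $\sup_t\|Z^\epsilon_\lambda\|_{H^a}\le C\epsilon B/\lambda=:\epsilon\bar R$. Rerun the proof of Lemma \ref{lemma:w infity} for $Y=w_\epsilon-Z^\epsilon_\lambda$. For $\epsilon$ small the Gr\"onwall exponent $\gamma-C\epsilon\bar R$ is at least $\gamma/2$, and the source term is $O(\epsilon)$. This yields
\[
\|w_\epsilon(t)\|_\infty\le \|w_0\|_\infty e^{-\gamma t/2}+R_\epsilon,\qquad R_\epsilon=O(\epsilon)\ \text{as}\ \epsilon\to0 ,
\]
with constants depending on $\gamma$, $B$ and $\lambda$.

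\textbf{Step 2: the $W^{1,p^\star}$ bound via the logarithmic Gr\"onwall argument.} Repeat (\ref{inequality: Y differential})--(\ref{def:alpha}) with these quantities. The bound $M_1$ in (\ref{inequality:C1}) becomes $M_1(\epsilon)$. Its $Z$-terms are $O(\epsilon)$. Its term $\|w\|_\infty\log_+(1/\|w\|_\infty)$ is at most $\sup_{x\le \|w_0\|_\infty+R_\epsilon}x\log_+(1/x)$, which is not small. However, the factor $\chi$ restricts attention to times after $\|w\|_\infty$ has decayed, where it is $\lesssim \|w_0\|_\infty e^{-\gamma t/2}+R_\epsilon$. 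The contribution $x\log_+(1/x)$ is then integrable in time up to a term of order $R_\epsilon\log(1/R_\epsilon)\to0$.

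Hence the analogue of (\ref{inequality:alpha 1}) reads
\[
\alpha(t)\le K(w_0,\gamma)-(\gamma-\delta(\epsilon))t,\qquad \delta(\epsilon)\to0 .
\]
So $\alpha(t)\to-\infty$ linearly. Choosing $\epsilon$ small so that, in addition, $e^{C_{p^\star}R_\epsilon t_1}-1$ is small, the same bookkeeping as in (\ref{estimate:Y R4}) works: take $M_2$ large relative to $K$, and note that the prefactor $e^{2C_{p^\star}\|w_0\|_\infty/\gamma}$ is a fixed constant absorbed by the choice of $M_2$. This produces a time $t_2$ with $\|\nabla Y(t_2)\|_{p^\star}\le1$.

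The re-entry argument after $T_0$ then goes through unchanged, because $\|w_\epsilon\|_\infty\le CR_\epsilon$ is small there. This gives $\sup_t\|\nabla w_\epsilon\|_{p^\star}<\infty$.

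\textbf{Step 3: all $p\in(2,\infty]$.} Plug the global $p^\star$ bound into (\ref{inequality:Y W1,p}) and (\ref{inequality:Y W1,infty}). On $[T_0,\infty)$ the coefficient there is $-\gamma+C_{p^\star}(\|w_\epsilon\|_\infty+\|\nabla w_\epsilon\|_{p^\star}+\|Z^\epsilon_\lambda\|_{H^a})$. This is where smallness is genuinely needed: after $T_0$ we have $\|\nabla w_\epsilon\|_{p^\star}\le 1+\epsilon\bar R$, and the unit threshold in the definition of $X_p$ may have to be replaced by a small threshold $\theta$. Working with $\log_+(\|\nabla Y\|/\theta)$ shifts the constants by $\log(1/\theta)$ times $\|w\|_\infty$, which is $O(R_\epsilon\log(1/\theta))$. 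Choosing $\theta$ small and then $\epsilon$ small makes the coefficient at most $-\gamma/2$ for $t\ge T_0$. Gr\"onwall then gives the uniform bound for every $p$, including $p=\infty$.

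\textbf{Main obstacle.} I expect Step 3 to be the main obstacle, specifically the need to make $\|\nabla w_\epsilon\|_{p^\star}$ eventually smaller than $\gamma/C_{p^\star}$, since the threshold $1$ in the original argument is not automatically small. The remedy I would try first is the rescaled threshold $\theta$, relying on the fact that with small noise the only persistent source of $\nabla w_\epsilon$ is the $O(\epsilon)$ forcing. This is also why $\epsilon^\star$ depends on $w_0$ through $K(w_0,\gamma)$ and $T_0$.
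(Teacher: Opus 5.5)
Your proposal is correct and is essentially the paper's argument. After the $\epsilon$-version of Lemma~\ref{lemma:w infity} (with $R_\epsilon\to0$), the decisive step in both is to lower the threshold in the logarithmic Gr\"onwall quantity from $1$ to a small $\delta$ (your $\theta$). Then $\|\nabla w_\epsilon\|_{p^\star}$ is eventually small, and the coefficient in \eqref{inequality:Y W1,p}--\eqref{inequality:Y W1,infty} becomes negative. The extra costs $\|w_\epsilon\|_\infty\log(1/\delta)$ and $F_\epsilon/\delta$ are absorbed by choosing $\epsilon$ small after $\delta$.

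The differences are organizational: the paper runs the threshold-$\delta$ argument once, while you run it first with threshold $1$ and then with $\theta$. There is one small slip. The indicator $\chi$ does not restrict to times when $\|w_\epsilon\|_\infty$ has decayed. The time-integral bound on $\|w_\epsilon\|_\infty\log_+(1/\|w_\epsilon\|_\infty)$ comes instead from $\|w_\epsilon(s)\|_\infty\le\|w_0\|_\infty e^{-\gamma s/2}+R_\epsilon$ together with the monotonicity of $x\log(1/x)$ near $0$.
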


\begin{proof}
   Following the same argument as in Lemma \ref{lemma:w infity} with $\frac{CB}{\lambda}\leq\frac \gamma 2 \wedge \frac{CB}{\gamma} $. Note that $\sup_{t \geq 0}\|Z_{\lambda,\epsilon}(t)\Vert_{H^a}^2  \leq \frac{\epsilon^2 C^2B^2}{\gamma} $. Then, by (\ref{inequality: w infty}), for $0\leq \epsilon \leq 1$,
   \begin{align*}
        \|w_\epsilon(t)\Vert_{\infty} &\leq \|w(0)\Vert_\infty\cdot e^{-(\gamma-\frac{\epsilon CB}{\lambda})t}  + \frac{\frac{\epsilon C^2B}{\lambda} }{\gamma-\frac{\epsilon CB}{\lambda}}\cdot (\frac{\epsilon CB}{\lambda}+  \frac{2C^2B^2} {\gamma^2}  ) + \frac{\epsilon CB}{\lambda} \\ &\leq \|w(0)\Vert_\infty\cdot e^{-\frac{\gamma t}{2}}  + \frac{\frac{\epsilon C^2B}{\lambda}}{\gamma-\epsilon\gamma/2} (\frac{\epsilon CB}{\lambda}+ \frac{2C^2B^2} {\gamma^2}  ) + \frac{\epsilon CB}{\gamma}. 
        \end{align*}
    We set $$ R_\epsilon= 2\frac{\frac{\epsilon C^2B}{\lambda}}{\gamma-\epsilon\gamma/2} (\frac{\epsilon CB}{\lambda}+ \frac{2C^2B^2} {\gamma^2}  ) + \frac{\epsilon CB}{\gamma}. $$ Hence, there exist $t_0(\|w_0\Vert_\infty)\geq 0$ such that  $ \mathbb{P}-a.s.  $
    \begin{equation}\label{inequality:R epsilon}
     \|w_\epsilon\Vert_{L^\infty(t_0,\infty;L^\infty)} \leq R_\epsilon\;\; and \; \; \|w_\epsilon\Vert_{L^\infty(0,\infty;L^\infty)} \leq  \|w_0\Vert_\infty \cdot e^{-\frac{\gamma t}{2}} +R_\epsilon.    \end{equation}
   Then, $R_\epsilon$ can be arbitrarily small as long as $\epsilon$ is small.  Next, recall the proof of Theorem \ref{Theorem:w bounded} and set $\gamma\geq 1$, $|\lambda-\gamma\vert \leq 1 $. Since $$\log(x+y) \leq \log((x+y)\vee \delta ) \leq \log 2+ \log(x\vee \delta) \vee \log(y \vee \delta ),$$
   and $$\log(xy\vee \delta) \leq \log (x \vee \delta) + \log (y \vee \delta)+ \log\frac{1}{\delta} $$
   for $x,y,\delta >0$, by (\ref{inequality: Y differential}), we have 
   \begin{align*}
      \partial_t\|\nabla Y_{\lambda,\epsilon}\Vert_p &\leq \Bigg[ -\gamma + C_p\|w_\epsilon(t)\Vert_{L^\infty} \Big[1+ \log\left(\|\nabla Y_{\lambda,\epsilon}(t)\Vert_{p}\vee \delta\right) \vee \log(\|\nabla Z_{\lambda,\epsilon}(t)\Vert_{p} \vee \delta )\\ &\ \ + \log(\frac{1}{\|w_\epsilon(t)\Vert_{L^\infty} }\vee \delta) + \log\frac{1}{\delta} \Big]   +C_p \|Z_{\lambda,\epsilon}(t)\Vert_{H^a }\Bigg]\cdot \|\nabla Y_{\lambda,\epsilon}(t)\Vert_{p} + F_{\epsilon}(t),    \end{align*} where $F_{\epsilon}(t):= C (\|Z_{\lambda,\epsilon}(t)\Vert_{H^a }^{2}
   + |\lambda-\gamma\vert\cdot \|Z_{\lambda,\epsilon}(t)\Vert_{H^a } ) $.
   
Setting $X_{\delta, \epsilon,p}(t)= \log(\|\nabla Y_{\lambda,\epsilon}(t)\Vert_{p} \vee \delta ) $, we have
\begin{equation}
    \partial_tX_{\delta,\epsilon,p} = \frac{1}{\|\nabla Y_{\lambda,\epsilon}(t)\Vert_{p} }\chi_{\{\|\nabla Y_{\lambda,\epsilon}(t)\Vert_{p}\geq \delta\} } \partial_t \|\nabla Y_{\lambda,\epsilon}\Vert_{p} .\end{equation}
Hence, for the time that $\|\nabla Y_{\lambda,\epsilon}(t)\Vert_{p} \leq \|\nabla Z_{\lambda,\epsilon}(t)\Vert_{p} $,  \begin{align*}
    \partial_t X_{\delta,\epsilon,p^\star} &\leq \Bigg[ -\gamma + C_{p^\star}\|w_\epsilon(t)\Vert_{L^\infty} \left[1  + \log(\frac{1}{\|w_\epsilon(t)\Vert_{L^\infty}  }\vee \delta ) +\log\frac{1}{\delta} \right] \\&\ \  +C_{p^\star} \|Z_{\lambda,\epsilon}(t)\Vert_{H^a } +  C_{p^\star}\|w_\epsilon(t)\Vert_{L^\infty}\cdot X_{\delta,\epsilon,p^\star}(t)  +F_\epsilon(t)/\delta \Bigg]\cdot \chi_{\{\|\nabla Y_{\lambda,\epsilon}(t)\Vert_{p}\geq \delta\}} . \end{align*} 
Since $\lim_{x \rightarrow 0^+}x \log(\frac{1}{x}\vee \delta)=0$, we have that for $t$ sufficiently large and $\epsilon$ sufficiently small,
$$C_{p^\star}\left(R+ R\cdot \log \frac{1}{\delta}+ \frac{R^2+R}{\delta}+ \|w_\epsilon(t)\Vert_{L^\infty}\cdot \log(\frac{1}{\|w_\epsilon(t)\Vert_{L^\infty} }\vee \delta )\right)< \gamma , \; \mathbb{P}-a.s.. $$ 
By following the same strategy as the proof of Theorem \ref{Theorem:w bounded}, we have that for any $w_0$, there exists $\epsilon^\star(\gamma,w_0)$ sufficiently small such that for $\epsilon<\epsilon^\star$ there exists $t^\star$ and for any $t\geq t^\star$,
$$\|\nabla Y_{\lambda,\epsilon}(t)\Vert_{p^\star} \leq \delta \vee \|\nabla Z_{\lambda,\epsilon}(t)\Vert_{p^\star}, \; \mathbb{P}-a.s.. $$ 
Since $R_\epsilon$ can be arbitrarily small as $\epsilon \rightarrow 0$, for any $t\geq t^\star$,  $$\|\nabla Z_{\lambda,\epsilon}(t)\Vert_{p^\star} \leq R_\epsilon\leq\delta. $$ 
We choose $0<4C_{p^\star}\delta<\gamma$. Then, for $\epsilon$ sufficiently small,
$$-\gamma+ C_{p^\star}[\|w_\epsilon(t)\Vert_{L^\infty} + \|\nabla w_\epsilon(t)\Vert_{p^\star} + \|Z_{\lambda,\epsilon}(t)\Vert_{H^a }] <-\sigma<0,\; t\geq t^\star ,$$ for some $\sigma>0$.     
By a similar argument, we get the desired result.
     
   


\end{proof}

\section{Existence of invariant measures}
In this section, we will prove the existence of invariant measures for equation (\ref{eqn:Euler 1}) by using Krylov-Bogoliubov's argument. In order to apply this method, the Feller property for the semigroup associated to the equation is required. In fact, we have the following result about the continuous dependence of solutions with respect to the initial data.
\begin{theorem}\label{Theorem: initial value continuous}
Let $\gamma\geq 0$. 
Given a sequence $\{x_n\}_{n}\in W^{1,\infty} $ which converges weakly$\star$ in $W^{1,\infty}$ to $x \in W^{1,\infty}$, we have
that, $\mathbb{P}$-a.s., for every $t>0$ the sequence $\{w(t;x_n)\}_{n}$ converges weakly$\star$ in $W^{1,\infty}$ to $w(t;x)$.
\end{theorem}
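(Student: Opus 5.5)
We will prove this by combining uniform bounds, a compactness argument, and pathwise uniqueness. Fix $\omega$ outside a null set, so that the noise bound (\ref{eqn:noise H norm}) holds and Theorem \ref{existence and uniqueness} and Theorem \ref{Theorem:w bounded} apply to every initial datum. Fix also $T>0$. Since $x_n\overset{\star}{\rightharpoonup}x$ in $W^{1,\infty}$, the uniform boundedness principle gives $\sup_n\|x_n\|_{W^{1,\infty}}\le K<\infty$.

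\textbf{Step 1: uniform bounds.} The finite-time part of the proof of Theorem \ref{Theorem:w bounded} is quantitative. The constants in (\ref{inequality:R}), (\ref{inequality:Gronwall Y}) and (\ref{inequality:Y W1,infty}) depend on the initial datum only through $\|w_0\|_{\infty}$ and $\|\nabla w_0\|_{p}$. Hence
\[
\sup_n\sup_{t\in[0,T]}\|w_n(t)\|_{W^{1,\infty}}\le C(K,T,B,\gamma),
\]
where $w_n:=w(\cdot;x_n)$. By Lemma \ref{Lemma:K Lp} and the estimate on $\|\nabla u\|_p$ from Kato's lemma, the velocities $u_n=\mathcal K\ast w_n$ are then bounded in $L^\infty(0,T;W^{1,p})$ for every $p<\infty$, and are therefore uniformly bounded and uniformly H\"older in space. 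Next, the equation gives $\partial_t w_n=-\gamma w_n-u_n\cdot\nabla w_n+\eta$, which is bounded in $L^2(0,T;L^\infty)\subset L^2(0,T;L^2)$. Thus $(w_n)$ is equicontinuous in time with values in $L^2$.

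\textbf{Step 2: compactness and passage to the limit.} The embedding $W^{1,\infty}\hookrightarrow C(\mathbb T^2)$ is compact, and $C(\mathbb T^2)\hookrightarrow L^2$ is continuous. We apply an Aubin--Lions--Simon (Arzel\`a--Ascoli) argument, interpolating the time-H\"older bound in $L^2$ against the uniform $W^{1,\infty}$ bound. This shows that every subsequence has a further subsequence with $w_{n_k}\to \tilde w$ in $C([0,T];C(\mathbb T^2))$. Moreover, $\tilde w\in L^\infty(0,T;W^{1,\infty})$, and $w_{n_k}(t)\overset{\star}{\rightharpoonup}\tilde w(t)$ for every $t$, because bounded sets of $W^{1,\infty}$ are weak$\star$ sequentially compact and the strong $C$ limit identifies the weak$\star$ limit. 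By the Biot--Savart bounds, $u_{n_k}\to\tilde u=\mathcal K\ast\tilde w$ uniformly as well. We then pass to the limit in the weak formulation: for $\phi\in C^\infty(\mathbb T^2)$,
\[
\langle w_{n}(t),\phi\rangle+\gamma\int_0^t\langle w_n,\phi\rangle\,ds-\int_0^t\langle w_n,u_n\cdot\nabla\phi\rangle\,ds=\langle x_n,\phi\rangle+\int_0^t\langle\eta,\phi\rangle\,ds.
\]
The nonlinear term converges because both factors converge uniformly. The initial term converges because $x_n\overset{\star}{\rightharpoonup}x$, and this weak$\star$ convergence implies strong convergence in $C(\mathbb T^2)$, as shown in the preliminaries. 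Hence $\tilde w$ is a solution with $\tilde w(0)=x$ and $\tilde w\in L^\infty(0,T;L^\infty)$.

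\textbf{Step 3: uniqueness, and the main obstacle.} By pathwise uniqueness (Theorem \ref{existence and uniqueness} iii)), $\tilde w=w(\cdot;x)$. Since the limit is independent of the subsequence, the whole sequence satisfies $w(t;x_n)\overset{\star}{\rightharpoonup}w(t;x)$ for every $t\in[0,T]$. Because $T$ is arbitrary and the null set does not depend on $T$, the convergence holds for all $t>0$.

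The main obstacle is Step 1. We must check carefully that the a priori $W^{1,\infty}$ bound on finite intervals depends only on $\|x_n\|_{W^{1,\infty}}$, with no largeness assumption on $\gamma$. This requires retracing the log-Gronwall argument (\ref{inequality:Gronwall Y})--(\ref{inequality:Y W1,infty}) with explicit dependence on the initial datum, and verifying that the same null set works for all $x_n$ simultaneously. The latter holds because the exceptional set comes only from the noise bound (\ref{eqn:noise H norm}).
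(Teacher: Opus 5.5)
Your argument is correct, but it reaches the conclusion by a different compactness mechanism than the paper.

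\textbf{The paper's route.} The paper extracts a subsequence converging weakly$\star$ in $L^\infty(0,T;W^{1,\infty})$. It imports from Bessaih--Ferrario the strong convergence $u_n\to u$ in $C([0,T];H)$. It then passes to the limit in the vorticity equation tested against $h\in H^1$. The nonlinear term is split into a piece controlled by $\|u_n-u\|_H$ and a piece $\langle w_n-w,\sum_i\partial_i(u_ih)\rangle$ handled by weak$\star$ convergence. This yields the limit equation for a.e.\ $t$, which is extended to all $t$ by continuity. The paper concludes with uniqueness and a density argument from $H^1$ to $W^{-1,1}$.

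\textbf{Your route.} You bound $\partial_t w_n$ in $L^2(0,T;L^2)$ directly from the equation. Interpolating against the uniform $W^{1,\infty}$ bound gives equicontinuity in $C(\mathbb T^2)$, and Arzel\`a--Ascoli then gives compactness in $C([0,T];C(\mathbb T^2))$. As a result:
\begin{itemize}
\item the nonlinear term converges trivially;
\item the velocity convergence follows from Lemma~\ref{Lemma:K Lp} rather than from an external reference;
\item weak$\star$ convergence at \emph{every} $t$ follows from the $W^{1,\infty}$ bound and identification of limits, with no a.e.-in-time step and no density step.
\end{itemize}

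\textbf{What each route buys.} Your route makes explicit something the paper leaves implicit. Weak$\star$ convergence in $L^\infty(0,T;W^{1,\infty})$, even tested on every interval $[0,T]$, does not by itself give $\langle w_n(t),h\rangle\to\langle w(t),h\rangle$ for a.e.\ $t$; think of $\sin(nt)$. Some time compactness is genuinely needed, and you supply it explicitly. The paper's route obtains it only through the imported $C([0,T];H)$ convergence, but in exchange it avoids any quantitative time-derivative estimate.

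\textbf{The shared ingredient.} Both arguments rest on the same finite-time $W^{1,\infty}$ bound from Theorem~\ref{Theorem:w bounded}, uniform in $n$. The paper simply asserts this bound, and you correctly flag it as the step needing care. One detail there: Lemma~\ref{lemma:w infity} is stated only for $\gamma>0$. For $\gamma=0$ you should replace (\ref{inequality:R}) by the elementary transport bound $\|w(t)\|_\infty\le\|w_0\|_\infty+\int_0^t\|\eta(s)\|_\infty\,ds$, which is also uniform in $n$ on $[0,T]$.
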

\begin{proof}
    We fix $\omega$ on a $\mathbb{P}-a.s.$ subset of $\Omega$ through the proof. By assumption, we have $x_n \in W^{1,\infty}$ and $\langle x_n,h\rangle_{W^{1,\infty},W^{-1,1}}\rightarrow \langle x,h\rangle_{W^{1,\infty},W^{-1,1}} $ for any $h \in W^{-1,1}$. Thanks to Theorem \ref{Theorem:w bounded} and the boundedness of $\{x_n\}$ in $W^{1, \infty}$, $w_n(\cdot)=w(\cdot;x_n)$ is bounded in $L^\infty(0,T;W^{1,\infty})$ for any $T>0$. Then, there exists a subsequence, which is still denoted by $w_n$, converging weakly$\star$ to some $w$ in $L^\infty(0,T;W^{1,\infty})$. In particular, for any $h \in W^{-1,1} $, we have 
    \begin{equation}\label{eqn:limit w int}
        \int_0^T \langle w_n(t),h\rangle_{W^{1,\infty},W^{-1,1}}\;dt\rightarrow \int_0^T\langle w(t),h\rangle_{W^{1,\infty},W^{-1,1}}\;dt ,
        \end{equation}
   as $n \rightarrow \infty$. Since (\ref{eqn:limit w int}) holds for every $T\geq0$, then for any given $T>0$ and a.e. $t \in [0,T], $  $$\langle w_n(t),h\rangle_{W^{1,\infty},W^{-1,1}}\rightarrow \langle w(t),h\rangle_{W^{1,\infty},W^{-1,1}}. $$
    Moreover, by the same argument as in Theorem 4 of \cite{bessaih2020invariant}, we can also show that $u_n$ converges to $u$ strongly in $C([0,T];H)$. To finish the proof, we need to show that $w$ is $w(\cdot;x)$. 
    
    Let $h \in H^1$, by the above argument, for a.e. $t \in [0,T] $ 
\begin{align*}
    \langle w_n(t),h \rangle_{W^{1,\infty},W^{-1,1}} &+ \gamma \int_0^t\langle w_n(s),h \rangle_{W^{1,\infty},W^{-1,1}}\;ds=-\int_0^t\langle u_n(s)\cdot \nabla w_n(s),h \rangle\;ds \\& \ \ + \langle x_n,h \rangle_{W^{1,\infty},W^{-1,1}}  + \int_0^t\langle \eta(t),h \rangle_{W^{1,\infty},W^{-1,1}}\;ds.  
    \end{align*}
Since $u \in C([0,T];H) $, we have $\sum_{i=1}^2\partial_i[u_i(\cdot)h] \in L^1(0,T;W^{-1,1}) $. Then, by integration by parts, 
\begin{align*}
    &\ \ \ \  \int_0^t\langle u_n(s)\cdot \nabla w_n(s),h \rangle\;ds- \int_0^t\langle u(s)\cdot \nabla w(s),h \rangle\;ds \\&= \int_0^t\langle [u_n(s)-u(s)]\cdot \nabla w_n, h(s) \rangle\;ds + \int_0^t\langle  [\nabla w_n(s)- \nabla w(s)],u(s)h \rangle\;ds\\&= -\int_0^t\langle [u_n(s)-u(s)]\cdot  w_n, \nabla h(s) \rangle\;ds  + \int_0^t \sum_{i=1}^2\langle  \partial_i[ w_n(s)-  w(s)],u(s)h_i \rangle\;ds\\&= -\int_0^t\langle [u_n(s)-u(s)]\cdot  w_n, \nabla h(s) \rangle\;ds -  \int_0^t\langle  w_n(s)-w(s),\sum_{i=1}^2\partial_i[u_i(s)h] \rangle_{W^{1,\infty},W^{-1,1}}\;ds \rightarrow 0  \end{align*}
  Here, we used the strong convergence of $u \in C([0,T];H) $, the boundedness of $\sup_{0\leq s \leq  T}\| w_n(s)\|_{\infty}$ and the weak$\star$ convergence of $ w_n \in L^\infty(0,T;W^{1,\infty}) $. As $n \rightarrow \infty$, we get for any $ h \in H^1$ and a.e. $t\in [0,T]$
  \begin{align*}
     \langle w(t),h \rangle + \gamma \int_0^t\langle w(s),h \rangle\;ds =-\int_0^t\langle u(s)\cdot &\nabla w(s),h \rangle\;ds + \langle x,h \rangle   + \int_0^t\langle \eta(s),h \rangle\;ds.   
  \end{align*}
Thus, $t \mapsto \langle w(t),h\rangle$ is continuous, and the above result holds for any $t\in[0,T]$. Thanks to the uniqueness of $w(t;x)$, this implies that for any $t\in[0,T]$, $w(t)=w(t;x)$ and 
$$\langle w_n(t)-w(t;x),h \rangle_{W^{1,\infty},W^{-1,1}} \rightarrow 0,\;\forall h \in H^1. $$
Note that $w_n(t)$ and $w(t)$ are bounded in $W^{1, \infty}$ by Theorem \ref{Theorem:w bounded}. Since $H^1$ is dense in $W^{-1,1}$, by some standard argument, we have for any $t \in [0,T]$ and any $h \in W^{-1,1}$ 
$$ \langle w_n(t)-w(t;x),h \rangle_{W^{1,\infty},W^{-1,1}} \rightarrow 0. $$
\end{proof}

We denote by $\mathcal{B}_b(W^{1,\infty},\mathcal{T}_{w\star} )$ the set of real bounded Borel functions under the weak$\star$ topology of $W^{1,\infty}$. Under our assumption about the noise, $w(i),\; i\in \mathbb{N}$ can be seen as a random dynamical system in the sense that
 $$w(i)=S(w({i-1}),\eta_i), \; i \in \mathbb{N}, $$ where $\eta_i(t)= \eta(t+i-1),\;t \in [0,1] $ are i.i.d. We define the operators $P_i, \; i\in\mathbb{N}_0:=\mathbb{N}\cup\{0\}$ as 
$$P_i\phi(x)= \mathbb{E}[\phi(w(i;x))],$$ for any $\phi \in \mathcal{B}_b(W^{1,\infty},\mathcal{T}_{w\star} ) $. For $x \in W^{1,\infty}$, the family of probability measures $\{P_i(x,\cdot)\}_{i\in \mathbb{N}_0}$ is defined as $$P_i(x,B)= \mathbb{P}(w(i;x)\in B),B\in \mathcal{B}(W^{1,\infty},\mathcal{T}_{w\star} ) .$$
We want to use Krylov-Bogoliubov's argument to show the existence of invariant measures. In order to do so, we need to show the Feller property of $P_i$. We have the following proposition:
\begin{proposition}\label{prop:Feller}
    The operator $P_i$ is sequentially weak$\star$ Feller in $W^{1,\infty}$, that is 
    \begin{equation}
        P_i:  SC_b(W^{1,\infty},\mathcal{T}_{w\star}) \rightarrow SC_b(W^{1,\infty} ,\mathcal{T}_{w\star}).    
    \end{equation} 
   Equivalently, one has
\begin{equation}
    P_i:C_b(W^{1,\infty} ,\mathcal{T}_{bw\star} ) \rightarrow C_b(W^{1,\infty} ,\mathcal{T}_{bw\star} ). 
    \end{equation}
\end{proposition}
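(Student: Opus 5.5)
The plan is to combine the pathwise continuity of Theorem \ref{Theorem: initial value continuous} with dominated convergence. Fix $i\in\mathbb N_0$ and $\phi\in SC_b(W^{1,\infty},\mathcal T_{w\star})$, and take a sequence $x_n\overset{\star}{\rightharpoonup}x$ in $W^{1,\infty}$. We must show that $P_i\phi(x_n)\to P_i\phi(x)$.

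\textbf{Step 1 (pathwise convergence).} By Theorem \ref{Theorem: initial value continuous}, there is a full-measure set $\Omega_0\subset\Omega$ such that for every $\omega\in\Omega_0$ and every $t>0$ we have $w(t;x_n)\overset{\star}{\rightharpoonup}w(t;x)$ in $W^{1,\infty}$. Here we take $t=i$; the case $i=0$ is trivial since $P_0\phi=\phi$.

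\textbf{Step 2 (pass through $\phi$).} Since $\phi$ is sequentially weak$\star$ continuous, for $\omega\in\Omega_0$ we get
\[
\phi(w(i;x_n)(\omega))\to\phi(w(i;x)(\omega)).
\]

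\textbf{Step 3 (dominated convergence).} We have $|\phi(w(i;x_n))|\le\|\phi\|_\infty$, so dominated convergence yields
\[
P_i\phi(x_n)=\mathbb E[\phi(w(i;x_n))]\to\mathbb E[\phi(w(i;x))]=P_i\phi(x).
\]
Hence $P_i\phi$ is sequentially weak$\star$ continuous. It is also bounded by $\|\phi\|_\infty$. This proves the first inclusion.

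Before Step 3 we must check measurability: $\phi(w(i;x))$ has to be a random variable, so that $P_i\phi$ is well defined. The plan is to use $\mathcal B(\mathcal T_{w\star})=\mathcal B(\mathcal T_{bw\star})$ together with $SC_b(\mathcal T_{w\star})=C_b(\mathcal T_{bw\star})$. Then $\phi$ is Borel for $\mathcal T_{bw\star}$, and therefore Borel for $\mathcal T_{w\star}$.

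Next, $\omega\mapsto w(i;x)(\omega)$ must be measurable into $(W^{1,\infty},\mathcal B(\mathcal T_{w\star}))$. Since $W^{-1,1}$ is separable, $\mathcal B(\mathcal T_{w\star})$ on bounded sets is generated by the maps $y\mapsto\langle y,h_k\rangle$ for a countable dense family $\{h_k\}$. So it suffices to show that each $\omega\mapsto\langle w(i;x),h_k\rangle$ is measurable. This follows because $w(i;x)$ is the limit of measurable Galerkin or regularized approximations, or alternatively because $w(i;x)=S(w(i-1),\eta_i)$ with $S$ continuous in the noise. I expect this measurability check to be the only delicate point. The remaining steps follow directly from the earlier results.

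\textbf{Step 4 (the equivalent formulation).} The second form of the statement follows from the identity
\[
C_b(W^{1,\infty},\mathcal T_{bw\star})=SC_b(W^{1,\infty},\mathcal T_{w\star})
\]
stated in Section \ref{Sec:Pre}. Applying this identity to both the domain and the range turns the first inclusion into the second.
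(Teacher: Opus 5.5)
Your proposal is correct and follows the paper's argument: the paper proves this proposition in one line, as a direct consequence of Theorem \ref{Theorem: initial value continuous} and dominated convergence, and the second formulation follows from the identity $C_b(W^{1,\infty},\mathcal{T}_{bw\star})=SC_b(W^{1,\infty},\mathcal{T}_{w\star})$. The paper leaves implicit the measurability of $\omega\mapsto\phi(w(i;x))$ that you flag; the simplest route is the measurability of $u(i)$ in $H$, since for smooth $h$ the pairing $\langle w(i),h\rangle$ is a continuous linear functional of $u(i)$, and it extends to all $h\in W^{-1,1}$ by density and the $W^{1,\infty}$ bound on $w(i)$.
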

\begin{proof}
    This is a direct consequence of Theorem \ref{Theorem: initial value continuous} and the Lebesgue dominated convergence theorem.
\end{proof}
Next, we want to show that $P_i$ defines a Markovian semigroup. Note that $W^{1, \infty}$ is not separable under strong topology. However, it is separable under weak$\star$ topology by our arguments in Section \ref{Sec:Pre}. Then, the Markovian property follows from a similar argument to the classical theory of Markovian semigroups in a Polish space.
\begin{proposition}
     Let $\gamma\geq 0$.
     Then, for every $\phi \in SC_b(W^{1,\infty},\mathcal{T}_{w\star} ) $, $x \in W^{1,\infty}$ and $i, j\in\mathbb{N}_0$, we have
     \begin{equation}\label{eqn: Markov 1}
         \mathbb{E}\left[\phi(w(i+j;x))|\mathcal{F}_i \right]= P_j\phi(w(i;x)),\; \mathbb{P}-a.s..
     \end{equation}
\end{proposition}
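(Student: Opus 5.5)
The plan is to exploit the random dynamical system structure $w(i)=S(w(i-1),\eta_i)$ with i.i.d. increments $\eta_i$. I take $\mathcal F_i$ to be the $\sigma$-algebra generated by $\eta|_{[0,i)}$, i.e.\ by $\eta_1,\dots,\eta_i$. By pathwise uniqueness (Theorem \ref{existence and uniqueness} iii)) and the flow property, $w(i+j;x)=S_j(w(i;x),\eta_{i+1},\dots,\eta_{i+j})$. Here $S_j$ is the $j$-fold composition of $S$. The first argument is $\mathcal F_i$-measurable, and the block $(\eta_{i+1},\dots,\eta_{i+j})$ is independent of $\mathcal F_i$ and has the same law as $(\eta_1,\dots,\eta_j)$. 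The identity (\ref{eqn: Markov 1}) then follows from the standard freezing lemma: for $G$ jointly measurable and bounded, $X$ $\mathcal G$-measurable and $Y$ independent of $\mathcal G$, one has $\mathbb E[G(X,Y)\mid\mathcal G]=g(X)$ with $g(y)=\mathbb E[G(y,Y)]$. Applying it with $G(y,\eta')=\phi(S_j(y,\eta'))$ gives $g(y)=P_j\phi(y)$.

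The key steps, in order, are as follows.

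\textbf{Step 1: flow property.} Uniqueness shows that the solution started at time $i$ from $w(i;x)$ and driven by the shifted noise coincides with $w(i+\cdot\,;x)$.

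\textbf{Step 2: measurability of $S_j$.} We need $S_j$ to be measurable as a map from $(W^{1,\infty},\mathcal B(\mathcal T_{w\star}))\times(\text{noise space})$ to $(W^{1,\infty},\mathcal B(\mathcal T_{w\star}))$. Continuity in the first argument comes from Theorem \ref{Theorem: initial value continuous}: sequential weak$\star$ continuity, which equals $\mathcal T_{bw\star}$-continuity. For continuity in the noise I would redo the compactness and uniqueness argument of Theorem \ref{Theorem: initial value continuous} with $\eta_n\to\eta$ in $L^2(0,1;H^a)$, using the uniform bounds of Theorem \ref{Theorem:w bounded}. This makes $S$ jointly sequentially continuous on bounded sets.

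\textbf{Step 3: from sequential continuity to Borel measurability.} I restrict to balls $\{\|y\|_{W^{1,\infty}}\le n\}$. These are weak$\star$ compact metrizable sets, since $W^{-1,1}$ is separable. On each ball, sequential continuity is genuine continuity, hence Borel measurability. Gluing over $n$ and using $\mathcal B(\mathcal T_{w\star})=\mathcal B(\mathcal T_{bw\star})$ gives global measurability, so $y\mapsto P_j\phi(y)$ is measurable (it also lies in $SC_b$ by Proposition \ref{prop:Feller}).

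\textbf{Step 4: freezing lemma.} With measurability in hand, the freezing lemma applies. Alternatively, one can first verify the identity for $\phi$ of the form $\phi(y)=f(\langle y,h_1\rangle,\dots,\langle y,h_m\rangle)$ with $h_k$ in a countable dense subset of $W^{-1,1}$ and $f$ bounded continuous. The general case then follows by a monotone class argument, since such cylinder functions generate $\mathcal B(\mathcal T_{w\star})$ on bounded sets.

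The main obstacle is the measurability issue of Step 3. Because $W^{1,\infty}$ is not separable in norm, the usual Polish-space machinery (regular conditional probabilities, measurability of $S$) is not directly available. The remedy is that bounded weak$\star$ sets are compact metrizable Polish spaces, and that the trajectory $w(i;x)$ stays in such a ball almost surely by Theorem \ref{Theorem:w bounded}. On each ball the classical argument therefore applies verbatim.
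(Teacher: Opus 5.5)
Your argument is correct, but it is organised differently from the paper's.

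\textbf{The paper's route.} It never uses joint measurability of the solution map. It rewrites the claim as $\mathbb E[\phi(w^{\chi}_{i,i+j})\mathcal Z]=\mathbb E[P_j\phi(\chi)\mathcal Z]$ for an arbitrary $\mathcal F_i$-measurable initial datum $\chi$. It then approximates $\chi$ almost surely in $\mathcal T_{w\star}$ by $\mathcal F_i$-measurable simple random variables $\chi_n$. For each deterministic value $\chi_n^{(l)}$ the identity follows directly from the independence of $w^{\chi_n^{(l)}}_{i,i+j}$ from $\mathcal F_i$. Finally it passes to the limit using Theorem \ref{Theorem: initial value continuous}, Proposition \ref{prop:Feller} and dominated convergence; this last step is where sequential weak$\star$ continuity of $\phi$ is needed.

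\textbf{What each route buys.} The paper needs only continuity in the initial datum, which it has already proved, and no stability with respect to the noise. However, it takes for granted that $w(i;x)$ is $\mathcal F_i$-measurable and that $w^{y}_{i,i+j}$ is a $\sigma(\eta_{i+1},\dots,\eta_{i+j})$-measurable random variable. Its simple-function approximation in $\mathcal T_{w\star}$ also silently relies on the metrizability of weak$\star$ balls, which you make explicit in Step 3.

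Your freezing-lemma route costs one extra ingredient not in the paper: sequential continuity of $S_j$ in the noise variable. This is routine. For instance, take the $L^p$ difference estimate from the proof of the exponential mixing theorem and add the forcing term $\eta_n-\eta$. This gives convergence in $L^\infty$, using that weak$\star$ convergence in $W^{1,\infty}$ implies uniform convergence of the initial data. Combined with the uniform $W^{1,\infty}$ bound on $[0,j]$ from Theorem \ref{Theorem:w bounded}, that yields weak$\star$ convergence. In exchange, your route actually justifies the measurability and independence facts the paper uses without comment. Moreover, the freezing lemma needs no continuity of $\phi$, so you obtain the Markov property for every bounded $\mathcal B(\mathcal T_{w\star})$-measurable $\phi$, not only for $\phi\in SC_b(W^{1,\infty},\mathcal T_{w\star})$.
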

\begin{proof}
We denote by $w^x_{i,i+j}$ the solution of equation (\ref{eqn:Euler 1}) on the time interval $[i,i+j]$ evaluated at time $i+j$ and started from $x$ at time $i$. For $\phi \in SC_b(W^{1,\infty},\mathcal{T}_{w\star} )  $, $x\in W^{1,\infty}$ and $i,j\in \mathbb{N}$, to prove (\ref{eqn: Markov 1}) is equivalent to prove
    \begin{equation}\label{eqn:Markov 2}
        \mathbb{E}\left[\phi(w(i+j;x))\mathcal{Z} \right]= \mathbb{E}[P_j\phi(w(i;x))\mathcal{Z}],    
        \end{equation}
     for every bounded $\mathcal{F}_i$-measurable random variable $\mathcal{Z}$. By Theorem \ref{Theorem:w bounded}, we have $\mathbb{P}(w(i;x)\in W^{1,\infty})=1 $. Moreover, by the uniqueness of the solution, we have
     $$w(i+j;x)=w_{i,i+j}^{w(i;x)},\;\mathbb{P}-a.s.. $$ Then, to get (\ref{eqn:Markov 2}), it is sufficient to show that
     \begin{equation}\label{eqn:Markov 3}
         \mathbb{E}\left[\phi(w_{i,i+j}^\chi)\mathcal{Z} \right]= \mathbb{E}[P_j\phi(\chi)\mathcal{Z}],         \end{equation}
         for any $W^{1,\infty}$ valued $\mathcal{F}_i$-measurable random variable $\chi$. Since $W^{1,\infty}$ is separable under the $\mathcal{T}_{w\star}$ topology, for such a random variable $\chi$, there exists a sequence of $\mathcal{F}_i$-measurable $W^{1,\infty} $ valued random variables,
 $$\chi_n=\sum_{l=1}^{k_n}\chi_n^{(l)}1_{A_n^{(l) }}$$ with $\chi_n^{(l)} \in W^{1,\infty}$ and $A_n^{(l)} \in \mathcal{F}_i$ with $\{A_n^{(1)},...,A_n^{(k_n)} \}$ a partition of $\Omega$, such that $\chi_n$ converges $\mathbb{P}-a.s.$ under $\mathcal{T}_{w\star} $ topology to $\chi$. By Theorem \ref{Theorem: initial value continuous} and Proposition \ref{prop:Feller}, we have $w_{i,i+j}^{\chi_n}$ converges weakly$\star$ to $w_{i,i+j}^{\chi}$ and  $$ P_j\phi(\chi_n)\rightarrow P_j \phi(\chi),\;\mathbb{P}-a.s.. $$ Therefore, to complete the proof, it is sufficient to show that
 \begin{equation}\label{eqn:Markov 4}
     \mathbb{E}\left[\phi(w_{i,i+j}^{\chi_n})\mathcal{Z} \right]= \mathbb{E}[P_j\phi(\chi_n)\mathcal{Z}],  
     \end{equation}
for any $W^{1,\infty}$-valued $\mathcal F_i$-measurable random variable $\chi$. For every deterministic $\chi_n^{(l)}\in W^{1,\infty}$, we have $w^{\chi_n^{(l)}}_{i,i+j}$ is independent of $\mathcal{F}_i$. Hence, 
 \begin{align*}
     \mathbb{E}\left[\phi(w_{i,i+j}^{\chi_n^{(l)}})\mathcal{Z} \right]&= \mathbb{E}\left[\mathbb{E}\big[\phi(w_{i,i+j}^{\chi_n^{(l)}})\mathcal{Z} \mid \mathcal{F}_i\big]\right] 
     = \mathbb{E}\left[\mathbb{E}\big[\phi(w_{i,i+j}^{\chi_n^{(l)}}) \mid \mathcal{F}_i\big]\mathcal{Z}\right]\\
     &= \mathbb{E}\left[\mathbb{E}\big[\phi(w_{i,i+j}^{\chi_n^{(l)}}) \big]\mathcal{Z}\right] = \mathbb{E}\left[\phi(w_{i,i+j}^{\chi_n^{(l)}}) \right] \cdot \mathbb{E}[\mathcal{Z}]\\
     &= \mathbb{E}\left[\phi(w(j;\chi_n^{(l)}))\right] \cdot \mathbb{E}[\mathcal{Z}]= P_j\phi(\chi_n^{(l)})\cdot \mathbb{E}[\mathcal{Z}]\\
     &= \mathbb{E}\left[P_j\phi(\chi_n^{(l)})\mathcal{Z} \right].     \end{align*}
     Since this holds for every $\mathcal{F}_i$-measurable $W^{1,\infty} $ valued random variable $\mathcal{Z}$ and $A_n^{(l)}$ is $\mathcal{F}_i$ measurable, we have
     \begin{equation}\label{eqn:Markov 5}
     \mathbb{E}\left[\phi(w_{i,i+j}^{\chi_n^{(l)}})1_{A_n^{(l)}}\mathcal{Z} \right]= \mathbb{E}\left[P_i\phi(w(j;\chi_n^{(l)}))1_{A_n^{(l)} }\mathcal{Z} \right].     \end{equation}
     Note that the equation is solved pathwise; we have 
     \begin{equation}\label{eqn:Markov 6}
         w_{i,i+j}^{\chi_n}= \sum_{l=1}^{k_n}w_{i,i+j}^{\chi_n^{(l)}}1_{A_n^{(l)}},\; \phi(w_{i,i+j}^{\chi_n})= \sum_{l=1}^{k_n}\phi\left(w_{i,i+j}^{\chi_n^{(l)}}\right)1_{A_n^{(l)}}     \end{equation}
    and
    \begin{equation}\label{eqn:Markov 7}
        P_j\phi\left(\chi_n \right)= \sum_{l=1}^{k_n}P_j\phi\left(\chi_n^{(l)} \right)1_{A_n^{(l)}}.    \end{equation}
    By substituting (\ref{eqn:Markov 5}), (\ref{eqn:Markov 6}) and (\ref{eqn:Markov 7}) into (\ref{eqn:Markov 4}), we get the desired result.
 
\end{proof}

\begin{corollary}
    For any $i, j\in\mathbb{N}_0$, we have $P_{i+j}=P_iP_j$ on $SC_b(W^{1,\infty},\mathcal{T}_{w\star} ) $.
\end{corollary}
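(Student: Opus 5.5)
The semigroup identity will follow from the Markov property just established, (\ref{eqn: Markov 1}), by taking expectations and applying the tower property. Fix $\phi \in SC_b(W^{1,\infty},\mathcal{T}_{w\star})$, $x\in W^{1,\infty}$ and $i,j\in\mathbb{N}_0$. By the definition of $P_{i+j}$ and the tower property,
\[
P_{i+j}\phi(x)=\mathbb{E}\big[\phi(w(i+j;x))\big]=\mathbb{E}\Big[\mathbb{E}\big[\phi(w(i+j;x))\,\big|\,\mathcal{F}_i\big]\Big].
\]
The Markov property (with the roles of $i$ and $j$ as there) replaces the inner conditional expectation by $P_j\phi(w(i;x))$, $\mathbb{P}$-a.s. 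This gives $P_{i+j}\phi(x)=\mathbb{E}[P_j\phi(w(i;x))]$.

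To recognise the right-hand side as $P_i(P_j\phi)(x)$, I need $P_j\phi$ to lie in the class on which $P_i$ is defined. By Proposition \ref{prop:Feller}, $P_j\phi\in SC_b(W^{1,\infty},\mathcal{T}_{w\star})$. It is bounded by $\|\phi\|_\infty$, and since it is sequentially weak$\star$ continuous it is $\mathcal{T}_{bw\star}$-continuous, hence Borel. Moreover $w(i;x)$ is a $W^{1,\infty}$-valued random variable by Theorem \ref{Theorem:w bounded}, so the composition $P_j\phi(w(i;x))$ is measurable. Therefore $\mathbb{E}[P_j\phi(w(i;x))]=P_i(P_j\phi)(x)$, which proves $P_{i+j}=P_iP_j$ on $SC_b(W^{1,\infty},\mathcal{T}_{w\star})$.

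The only delicate point is this measurability. It relies on the equality $\mathcal{B}(\mathcal{T}_{w\star})=\mathcal{B}(\mathcal{T}_{bw\star})$ recalled in Section \ref{Sec:Pre}, which is what allows the sequentially weak$\star$ continuous function $P_j\phi$ to be treated as an admissible test function for $P_i$. The trivial cases $i=0$ or $j=0$ follow because $P_0$ is the identity.
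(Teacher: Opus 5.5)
Your proof is correct and follows the paper's proof: take expectations in the Markov property \eqref{eqn: Markov 1} (tower property) to get $P_{i+j}\phi(x)=\mathbb{E}[P_j\phi(w(i;x))]=P_i(P_j\phi)(x)$. The only addition is your explicit check, via Proposition~\ref{prop:Feller} and $\mathcal{B}(\mathcal{T}_{w\star})=\mathcal{B}(\mathcal{T}_{bw\star})$, that $P_j\phi$ is a bounded Borel function on which $P_i$ may act; the paper leaves this implicit.
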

\begin{proof}
    This is a direct consequence of the above proposition. By taking the expectation on both sides of (\ref{eqn: Markov 1}), we have
 $$\mathbb{E}\left[\phi(w(i+j;x)) \right]= \mathbb{E}\left[P_j\phi(w(i;x))\right], $$ which implies $(P_{i+j}\phi)(x)= (P_i(P_j\phi))(x) $.
\end{proof}
According to the above arguments, $\{P_i\}_{j\in \mathbb{N}_0}$ acting on $SC_b(W^{1,\infty},\mathcal{T}_{w\star} )= C_b(W^{1,\infty} ,\mathcal{T}_{bw\star} ) $ is a Markovian semigroup. We say that a probability measure $\mu$ on $(W^{1,\infty},\mathcal{B}(\mathcal{T}_{bw\star} ))$ is an invariant measure for equation (\ref{eqn:Euler 1}) if 
\begin{equation}\label{eqn:invariant measure}
    \int P_i\phi\;d\mu=\int \phi \;d\mu, \forall i \in \mathbb{N}_0,\;\forall \phi \in C_b(W^{1,\infty} ,\mathcal{T}_{bw\star} ). \end{equation}
Similarly, we denote by $\{P_i^\epsilon\}_{t\geq 0}$ the associated semigroup to $w_\epsilon$ and by $\mu^\epsilon$ its invariant measure. The standard approach for proving the existence of invariant measures is Krylov–Bogoliubov’s method. This method is usually used on Polish space (see Section 3.1 of \cite{da1996ergodicity}). In \cite{bessaih2020invariant}, Bessaih and Ferrario dealt with $L^\infty$ space and used Krylov–Bogoliubov’s argument with weak$\star$ topology of $L^\infty$. We apply this method to $W^{1,\infty}$ space with weak$\star$ topology and prove the following existence result:
\begin{theorem}\label{Theorem:existence 1}
    There exists $\gamma^\star>0$ such that for any $\gamma \geq \gamma^\star $, the equation (\ref{eqn:Euler 1}) has at least one invariant measure.
\end{theorem}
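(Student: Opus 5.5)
We will run the Krylov--Bogoliubov argument in the bounded weak$\star$ topology of $W^{1,\infty}$, in the same way as Bessaih and Ferrario treat $L^\infty$. Fix a deterministic initial datum, for instance $w_0=0$ (or any $x\in W^{1,\infty}$). Let $\gamma^\star=\gamma^\star(w_0)$ be the threshold given by Theorem \ref{Theorem:w bounded}, and take $\gamma\ge\gamma^\star$. For $n\in\mathbb N$ we form the time averages
\[
\mu_n(B):=\frac1n\sum_{i=0}^{n-1}P_i(w_0,B),\qquad B\in\mathcal B(\mathcal T_{bw\star}).
\]
These are Borel probability measures on $(W^{1,\infty},\mathcal T_{bw\star})$, and they are well defined because $\mathcal B(\mathcal T_{w\star})=\mathcal B(\mathcal T_{bw\star})$.

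\textbf{Step 1 (tightness).} By Theorem \ref{Theorem:w bounded} and Lemma \ref{lemma:w infity}, $\sup_{t\ge0}\|w(t;w_0)\|_{W^{1,\infty}}<\infty$ $\mathbb P$-a.s. We need a deterministic bound, and we will extract it by tracking the constants in that proof. Every quantity involved there ($R(\gamma)$, $\bar R$, $M_1$, $t_1$, $T_0$) is controlled by $B$, $\gamma$ and $w_0$ alone, because the noise bound \eqref{eqn:noise H norm} holds almost surely with the deterministic constant $B$. Hence there is a deterministic $K=K(\gamma,w_0)$ with $\|w(i;w_0)\|_{W^{1,\infty}}\le K$ a.s. for all $i$. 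The closed ball $\bar B_K$ of $W^{1,\infty}$ is weak$\star$ compact by Banach--Alaoglu, since $W^{1,\infty}=(W^{-1,1})^*$. On bounded sets the topologies $\mathcal T_{bw\star}$ and $\mathcal T_{w\star}$ coincide, and because $W^{-1,1}$ is separable, $\bar B_K$ is metrizable and compact. Every $\mu_n$ is supported on $\bar B_K$, so the family $\{\mu_n\}$ is tight in the strongest possible sense.

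\textbf{Step 2 (limit point).} We view each $\mu_n$ as a probability measure on the compact metric space $(\bar B_K,\mathcal T_{w\star})$. By Prokhorov's theorem some subsequence $\mu_{n_k}$ converges weakly to a probability measure $\mu$ carried by $\bar B_K$. Extending $\mu$ by zero gives a Borel measure on $(W^{1,\infty},\mathcal B(\mathcal T_{bw\star}))$. For $\phi\in C_b(W^{1,\infty},\mathcal T_{bw\star})$, the restriction $\phi|_{\bar B_K}$ is weak$\star$ continuous, so $\int\phi\,d\mu_{n_k}\to\int\phi\,d\mu$.

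\textbf{Step 3 (invariance).} Fix $j\in\mathbb N_0$ and $\phi\in C_b(W^{1,\infty},\mathcal T_{bw\star})$. Proposition \ref{prop:Feller} gives $P_j\phi\in C_b(W^{1,\infty},\mathcal T_{bw\star})$, and the corollary gives $P_iP_j=P_{i+j}$. The averages then telescope:
\[
\Big|\int P_j\phi\,d\mu_{n}-\int\phi\,d\mu_n\Big|=\frac1n\Big|\sum_{i=n}^{n+j-1}P_i\phi(w_0)-\sum_{i=0}^{j-1}P_i\phi(w_0)\Big|\le\frac{2j\|\phi\|_\infty}{n}\to0 .
\]
Passing to the limit along $n_k$ and using Step 2 for both $\phi$ and $P_j\phi$ yields $\int P_j\phi\,d\mu=\int\phi\,d\mu$, which is \eqref{eqn:invariant measure}.

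\textbf{Main obstacle.} The delicate point is Step 1: obtaining a bound that is uniform in $\omega$, not only almost surely finite, so that the $\mu_n$ all live on one fixed weak$\star$-compact ball. If the constants in Theorem \ref{Theorem:w bounded} turn out to depend on the path beyond $B$, we will instead use Chebyshev's inequality on $\mathbb E\|w(i)\|_{W^{1,\infty}}$, bounded uniformly in $i$, to obtain tightness on balls $\bar B_K$ with $K\to\infty$. A second point is that $(W^{1,\infty},\mathcal T_{bw\star})$ is not Polish, which is why Prokhorov's theorem is applied only on the metrizable compact ball.
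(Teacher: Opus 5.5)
Your argument is correct and follows the paper's Krylov--Bogoliubov skeleton: averages of the laws of $w(i;0)$, tightness from Theorem~\ref{Theorem:w bounded} plus Banach--Alaoglu, and the Feller property plus telescoping for invariance. The genuine difference is the compactness step.

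The paper only proves $\mathcal{T}_{bw\star}$-tightness. It then applies Jakubowski's Prokhorov theorem for non-metrizable spaces, which is why it must exhibit the countable separating family $g_l=\langle\cdot,h_l\rangle_{W^{1,\infty},W^{-1,1}}$. You instead put all $\mu_n$ on one closed ball $\bar B_K$, which is weak$\star$ compact and metrizable because $W^{-1,1}$ is separable. Classical Prokhorov on a compact metric space then suffices. This is more elementary, but it costs you a deterministic bound $K(\gamma,w_0)$, which the a.s.\ statement of Theorem~\ref{Theorem:w bounded} does not give by itself. Your justification is that every constant in Lemma~\ref{lemma:w infity} and in the proof of Theorem~\ref{Theorem:w bounded} ($R$, $\bar R$, $M_1$, the bound on $\alpha$, $t_1$, $T_0$) depends only on $B$, $\gamma$ and $w_0$, since \eqref{eqn:noise H norm} holds with a deterministic $B$. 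That matches how those estimates are derived, so your route goes through.

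The ``main obstacle'' you identify is, however, not an obstacle for tightness. The quantity $S:=\sup_{t\ge0}\|w(t;0)\|_{W^{1,\infty}}$ is a single a.s.\ finite random variable. Hence $\sup_i P_i(0,\bar B_K^{c})\le\mathbb{P}(S>K)\to0$ as $K\to\infty$, which is exactly why the paper can say tightness follows immediately.

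Your Chebyshev fallback would need $\sup_i\mathbb{E}\|w(i)\|_{W^{1,\infty}}<\infty$, and nothing in the paper provides that. Moreover, in that scenario the $\mu_n$ are no longer carried by a single ball. Your Step~2 would then have to be replaced by Jakubowski's theorem (or an equivalent $\sigma$-compact/diagonal argument), which is the paper's route.
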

\begin{proof}
    We denote by $m_i$ the law of random variable $w(i;0)$ on $\mathcal{B}(\mathcal{T}_{bw\star} ) $. We will show that the sequence of probability measures $\{\mu_n\}_{n\in \mathbb{N}}$ defined by $$\mu_n=\frac{1}{n}\sum_{i=1}^n m_i,$$ is $\mathcal{T}_{bw\star}$-tight. Actually, by the Banach–Alaoglu theorem, the set $\{\|x\Vert_{W^{1,\infty}}\leq R\}$ is $\mathcal{T}_{bw\star}$-compact. Thanks to the global boundedness result for $w$ in Theorem \ref{Theorem:w bounded}, it follows immediately that $\{\mu_n\}_{n\in \mathbb{N}}$ is $\mathcal{T}_{bw\star}$-tight; that is, for any $\varepsilon>0$, there exists a $\mathcal{T}_{bw\star}$-compact set $K_\varepsilon\subset W^{1,\infty}$ such that  $$\inf_n\mu_n(K_\varepsilon)>1-\varepsilon. $$

    Next, we apply an extended version of Prokhorov’s theorem for non-metric spaces. This version is due to Jakubowski (see Theorem 3 in \cite{jakubowski1998almost}). We also note that this extended version of Prokhorov’s theorem was used in \cite{bessaih2020invariant} in the $L^\infty$ setting. The crucial point here is that the space $W^{-1,1}$, viewed as the predual of $W^{1,\infty}$, is separable, similarly to the $L^1$ space. To apply Jakubowski’s theorem, we need to verify that $W^{1,\infty}$ is countably separated under the bounded weak$\star$ topology $\mathcal T_{bw^\star}$. Namely, we must show that there exists a countable family of $\mathcal T_{bw^\star}$-continuous functions $\{g_i:W^{1,\infty}\rightarrow [0,1] \}_{i \in \mathbb{N}} $ that separates points of $W^{1,\infty}$. Since $W^{-1,1}$ is separable and $W^{1,\infty}=(W^{-1,1})^*$, there exists a countable sequence $\{h_l\}\in W^{-1,1}$ separating points of $W^{1,\infty}$; that is, for any two elements $x\neq y $ in $W^{1,\infty}$ there exists $h_l$ such that $$\langle x,h_l\rangle_{W^{1,\infty},W^{-1,1}} \neq \langle y,h_l\rangle_{W^{1,\infty},W^{-1,1}} .$$ Since that mapping $g_l(\cdot)= \langle \cdot,h_l\rangle_{W^{1,\infty},W^{-1,1}} $ is $\mathcal{T}_{w\star} $ continuous, it is also $\mathcal{T}_{bw\star} $ continuous.

    By this extended Prokhorov’s theorem and the $\mathcal{T}_{bw\star}$-tightness of $\{\mu_n\}_{n\in \mathbb{N}}$, there exists a subsequence $\{\mu_{n_k}\}_{k\in \mathbb{N}}$, $n_k \uparrow \infty$ and a probability measure $\mu$ on $\mathcal{B}(\mathcal{T}_{bw\star} ) $ such that $\mu_{n_k} $ converges weakly to $\mu$ as $k\rightarrow \infty$, that is, 
    \begin{equation}\label{limit:weak convergence}
        \int\phi\;d\mu_{n_k}\rightarrow \int\phi\;d\mu, \forall\phi \in C_b(W^{1,\infty} ,\mathcal{T}_{bw\star} ).    \end{equation}
     We have shown in Proposition \ref{prop:Feller} that $P_t\phi \in C_b(W^{1,\infty} ,\mathcal{T}_{bw\star} ) $ for $\phi \in C_b(W^{1,\infty} ,\mathcal{T}_{bw\star} ) $. Then, for $t\geq 0$,
    \begin{align*}
        \langle P_i\phi, \mu\rangle&= \lim_{k\rightarrow \infty}\langle P_i\phi, \mu_{n_k} \rangle\\&= \lim_{k\rightarrow \infty}\frac{1}{n_k}\langle P_i\phi,\sum_{j=1}^{n_k} m_{j} \rangle \\&=
        \lim_{k\rightarrow \infty}\frac{1}{n_k}\langle \phi,\sum_{j=i+1}^{i+n_k} m_{j} \rangle \\&= 
        \lim_{k\rightarrow \infty}\Big[\frac{1}{n_k}\langle \phi,\sum_{j=1}^{n_k} m_{j} \rangle + \frac{1}{n_k}\langle \phi,\sum_{j=n_k+1}^{i+n_k} m_{j} \rangle- \frac{1}{n_k}\langle \phi,\sum_{j=1}^{i} m_{j} \rangle \Big]\\&=\lim_{k\rightarrow \infty} \langle \phi, \mu_{n_k} \rangle = \langle \phi, \mu \rangle.       
        \end{align*}
        This completes the proof.

\end{proof}

\begin{theorem}\label{Theorem: existence 2}
    Let $\gamma>0$. There exists $\epsilon(\gamma)>0$ such that for any $0<\epsilon<\epsilon(\gamma)$, the family $\{P_i^\epsilon\}_{i \in \mathbb{N}_0}$ for equation (\ref{eqn:Euler 2}) admits at least one invariant measure. 

    \end{theorem}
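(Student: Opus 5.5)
We will repeat the Krylov--Bogoliubov argument of Theorem \ref{Theorem:existence 1}, this time for the family $\{P_i^\epsilon\}$ associated with (\ref{eqn:Euler 2}). The only ingredient that changes is the global-in-time bound: it now comes from Corollary \ref{corollary:boundedness} rather than from Theorem \ref{Theorem:w bounded}.

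\emph{Step 1: a bound for the orbit of $0$.} Fix $\gamma>0$ and take the initial datum $w_0=0$. Corollary \ref{corollary:boundedness} with $p=\infty$ gives $\epsilon^\star(\gamma,0)>0$, and we set $\epsilon(\gamma):=\epsilon^\star(\gamma,0)$. For $0<\epsilon<\epsilon(\gamma)$ we then have $\sup_{t\ge0}\|\nabla w_\epsilon(t;0)\|_\infty<\infty$ $\mathbb P$-a.s. In addition, (\ref{inequality:R epsilon}) gives $\sup_{t\ge0}\|w_\epsilon(t;0)\|_\infty\le R_\epsilon$.

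We want these bounds to hold with a deterministic constant $K_\epsilon$. This should be available: each constant in the proof of the corollary depends only on $\gamma$, $\epsilon$, $B$ and $w_0=0$, and $B$ is the deterministic bound in (\ref{eqn:noise H norm}). If the constant were only finite almost surely, we would instead obtain tightness from Chebyshev's inequality applied to the random bound; that route needs a uniform tail bound, which the deterministic constants provide in any case.

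\emph{Step 2: tightness and weak$\star$ limits.} Let $m_i^\epsilon$ be the law of $w_\epsilon(i;0)$ and set $\mu_n^\epsilon=\frac1n\sum_{i=1}^n m_i^\epsilon$. By Step 1, every $\mu_n^\epsilon$ is supported on the ball $\{\|x\|_{W^{1,\infty}}\le K_\epsilon\}$, which is $\mathcal T_{bw\star}$-compact by Banach--Alaoglu. Hence $\{\mu_n^\epsilon\}$ is $\mathcal T_{bw\star}$-tight. Countable separation by the functionals $g_l=\langle\cdot,h_l\rangle$ is proved exactly as in Theorem \ref{Theorem:existence 1}, so Jakubowski's version of Prokhorov's theorem yields a subsequence $\mu_{n_k}^\epsilon\to\mu^\epsilon$ weakly on $\mathcal B(\mathcal T_{bw\star})$.

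\emph{Step 3: Feller and Markov properties.} Theorem \ref{Theorem: initial value continuous}, Proposition \ref{prop:Feller}, the Markov property and the semigroup corollary must be checked for (\ref{eqn:Euler 2}). Their proofs use only three facts:
\begin{itemize}
\item bounds on $[0,T]$ from Theorem \ref{Theorem:w bounded}, which hold for every $\gamma\ge0$ and for the noise $\epsilon\eta$, since $\epsilon\eta$ satisfies (\ref{eqn:noise H norm}) with $\epsilon B$ in place of $B$;
\item pathwise uniqueness from Theorem \ref{existence and uniqueness};
\item the i.i.d.\ structure of the noise on unit intervals.
\end{itemize}
All three are unaffected by multiplying the noise by $\epsilon$, so the results transfer verbatim. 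The telescoping computation at the end of the proof of Theorem \ref{Theorem:existence 1} then gives $\langle P_i^\epsilon\phi,\mu^\epsilon\rangle=\langle\phi,\mu^\epsilon\rangle$ for all $\phi\in C_b(W^{1,\infty},\mathcal T_{bw\star})$.

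\emph{Main obstacle.} The hard part is Step 1, namely making sure that the uniform-in-time $W^{1,\infty}$ bound from Corollary \ref{corollary:boundedness} holds with a deterministic constant, so that tightness holds. The first thing to do is to trace the constants $t^\star$, $\sigma$ and $R_\epsilon$ in that proof and confirm they depend on the noise only through $B$. Choosing the starting point $w_0=0$ is what makes $\epsilon(\gamma)$ independent of any initial datum, which is the form the statement requires.
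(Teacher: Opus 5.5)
Your proposal is correct and follows the paper's own route. Corollary \ref{corollary:boundedness} supplies the uniform-in-time $W^{1,\infty}$ bound for the orbit of $0$, and the Krylov--Bogoliubov/Jakubowski argument of Theorem \ref{Theorem:existence 1}, with the Feller and Markov properties transferred to the noise $\epsilon\eta$, does the rest.

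The obstacle you flag in Step 1 is not really one. Since $K:=\sup_{i\in\mathbb N}\|w_\epsilon(i;0)\|_{W^{1,\infty}}$ is a single $\mathbb P$-a.s. finite random variable, $\mu_n^\epsilon(\{\|x\|_{W^{1,\infty}}>r\})\le\mathbb P(K>r)\to0$ uniformly in $n$. So tightness holds without deterministic constants or a Chebyshev bound.
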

\begin{proof}
    We have shown in Corollary \ref{corollary:boundedness} that for $2<p\leq\infty$
 $$\sup_{t\geq0}\|\nabla w_\epsilon(t)\Vert_{p} < \infty,\; \mathbb{P}-a.s..  $$
 Then, the existence of invariant measures $\mu^\epsilon$ follows from the same arguments as Theorem \ref{Theorem:existence 1}.
\end{proof}

\begin{remark}\label{remark:extension}
    In \cite{bessaih2020invariant}, the authors studied damped stochastic Euler equations perturbed by Gaussian noise, with the damping term $\gamma>0$. They proved the existence of invariant measures on the weak$\star$ topology of $L^\infty$.   
   Our result for the existence of invariant measures is stronger than the result in \cite{bessaih2020invariant}. Note that the weak$\star$ topology of $W^{1,\infty}$ is stronger than the weak$\star$ topology of $L^\infty$. In fact, $\mu$ and $\mu^\epsilon$ in Theorem \ref{Theorem:existence 1} and Theorem \ref{Theorem: existence 2} are naturally viewed as probability measures on $E$ by taking $\bar{\mu}(B)=\mu(B\cap W^{1,\infty}),\;\bar{\mu}^\epsilon(B)=\mu^\epsilon(B\cap W^{1,\infty}) $ for any $B \in \mathcal{B}(E)$. These extensions are well-defined since the inclusion $i:(W^{1,\infty},\mathcal{T}_{bw\star})\mapsto E$ is continuous. 
   Since $E$ is a Polish space, the extension of the transition probabilities and the preservation of the Markov property are standard. Then, by (\ref{C(E)}), they satisfy $$\int_{E} P_i\phi\;d\bar{\mu} = \int_{W^{1,\infty}} P_i\phi\;d\mu  =\int_{W^{1,\infty}} \phi \;d\mu = \int_{E} \phi \;d\bar{\mu}, \forall i \in \mathbb{N}_0,\;\forall \phi \in C_b(E), $$ 
 while in \cite{bessaih2020invariant}, the test function $\phi$ was only proved to be valid for continuous bounded functions on $L^{\infty}$ space with weak$\star$ topology.
\end{remark}

\section{Exponential mixing}
In this section, we prove the uniqueness of the invariant measure and establish exponential convergence to equilibrium. More precisely, we prove exponential mixing for the discrete-time Markov
semigroup $(P_i)_{i\in\mathbb N_0}$ associated with the dynamics. We have the following theorem.
\begin{theorem}
    There exists $\bar\gamma>0$ such that, for every $\gamma\ge\bar\gamma$,
equation \eqref{eqn:Euler 1} admits a unique invariant measure
$\mu$ on $E$. Moreover, for any $x \in E$ and $\phi \in \text{Lip}_b(E)$,
    \begin{equation}\label{eqn:exponential mixing 1}
        |P_i\phi(x)-\int_{E}\phi\;d\mu\vert \leq c(x)e^{-c(\gamma)i}\|\phi\Vert_{\text{Lip}},
    \end{equation}
    where $c(x)$, $c(\gamma)>0$ depend on $x$ and $\gamma$ respectively.
\end{theorem}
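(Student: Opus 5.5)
We will prove the theorem by establishing a contraction of the dynamics in $E=C(\mathbb{T}^2)$, uniformly in the noise, on the absorbing ball produced by the damping. There is no need to invoke the Kuksin--Shirikyan coupling with a controllability hypothesis. The coupling is synchronous: two solutions $w^1=w(\cdot;x)$ and $w^2=w(\cdot;y)$ are driven by the \emph{same} realisation of $\eta$. Their difference $\rho=w^1-w^2$ satisfies
\[
\partial_t\rho+\gamma\rho+u^1\cdot\nabla\rho+(\mathcal K\ast\rho)\cdot\nabla w^2=0 ,
\]
and the noise has cancelled. Testing with $\rho|\rho|^{p-2}$ kills the transport term because $\nabla\cdot u^1=0$. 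Lemma \ref{Lemma:K Lp} gives $\|\mathcal K\ast\rho\|_\infty\le C_{p_0}\|\rho\|_{p}$, and we let $p\to\infty$ as in Lemma \ref{lemma:w infity}. This yields
\[
\partial_t\|\rho\|_\infty\le\bigl(-\gamma+C\|\nabla w^2(t)\|_\infty\bigr)\|\rho\|_\infty .
\]
Hence, whenever $\|\nabla w^2\|_\infty$ stays below $\gamma/(2C)$, we obtain the pathwise bound $\|\rho(t)\|_\infty\le e^{-\gamma t/2}\|\rho(0)\|_\infty$.

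The first key step is to make the gradient bound of Theorem \ref{Theorem:w bounded} quantitative and uniform on an absorbing set. We take $\gamma$ large and fix a ball $\mathcal B_\rho=\{\|w\|_{W^{1,\infty}}\le\rho_0\}$, with $\rho_0$ chosen so that the constant $\kappa(\log_+\rho_0\vee\rho_0)+C_\kappa+\dots$ in \eqref{def:gamma} is at most $\gamma$. This is possible by the Remark after Theorem \ref{Theorem:w bounded}, since $\gamma^\star=o(\cdot)$. Rerunning the proof with the explicit constants of Lemma \ref{lemma:w infity} ($R(\gamma)=O(\gamma^{-1})$, $\bar R=O(\gamma^{-1})$), we aim to show two things. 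First, $\mathcal B_\rho$ is invariant. Second, after a deterministic time $T_0$, every trajectory starting in $\mathcal B_\rho$ satisfies $\|\nabla w(t)\|_\infty\le 1+C\bar R$. In the regime $t\ge T_0$, \eqref{inequality:Y W1,infty} has negative coefficient $-\sigma$, so the sup is controlled by $C\bar R/\sigma$, which is small for large $\gamma$. Thus $C\|\nabla w^2\|_\infty<\gamma/2$ for $t\ge T_0$. Combined with the $W^{1,\infty}$ bound on $[0,T_0]$, this gives, for $x,y\in\mathcal B_\rho$ and all $i$, almost surely
\[
\|w(i;x)-w(i;y)\|_\infty\le C(\rho_0,\gamma)e^{-\gamma i/2}\|x-y\|_\infty .
\]
Since this holds for every $\omega$, taking expectations gives $|P_i\phi(x)-P_i\phi(y)|\le C e^{-\gamma i/2}\|\phi\|_{\rm Lip}\|x-y\|_\infty$.

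Next, uniqueness and mixing follow. By Theorem \ref{Theorem:existence 1} and Remark \ref{remark:extension}, an invariant measure $\mu$ exists on $E$. The measure constructed from $w(i;0)$ is supported in $\mathcal B_\rho$, because $\mathcal B_\rho$ is invariant and $\mathcal T_{bw\star}$-closed. Integrating the previous bound in $y$ against $\mu$ gives \eqref{eqn:exponential mixing 1} for $x\in\mathcal B_\rho$, with $c(x)\le C\,(\|x\|_\infty+\rho_0)$. For a general initial datum $x$ (taken in $W^{1,\infty}$, the space where the dynamics is defined), we use the Markov property $P_i=P_{i-k}P_k$. Once the first step is in place, the trajectory enters and stays in $\mathcal B_\rho$ after a finite time $k(x)$ depending on $\|x\|_{W^{1,\infty}}$. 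Then $|P_i\phi(x)-\mu(\phi)|\le Ce^{-\gamma(i-k(x))/2}\|\phi\|_{\rm Lip}$, and we absorb $e^{\gamma k(x)/2}$ into $c(x)$. If a second invariant measure $\nu$ existed, integrating the mixing estimate in $x$ against $\nu$ would force $\nu=\mu$. Here $\nu$ must be concentrated on the set where the dynamics is defined, and the absorption step shows that its mass eventually lies in $\mathcal B_\rho$.

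The main obstacle is the first step. Theorem \ref{Theorem:w bounded} only gives $\gamma^\star(w_0)$ depending on the initial datum, whereas we need a single $\bar\gamma$ for which all data enter a common ball with a uniform small gradient bound. To obtain this, we will first use Lemma \ref{lemma:w infity} to reach $\|w\|_\infty\le 2R$. The log-Gronwall argument for $X_{p^\star}$ then grows at most like $C R\,t\cdot X$. We will try to show that the drift $-\gamma+CR(1+\log_+\bar R)+M_1$ is negative independently of $w_0$, so that $\log\|\nabla w\|_{p^\star}$ decreases linearly after an $x$-dependent transient. If this fails for arbitrary data, we will state the mixing for $x$ in $\mathcal B_\rho$, or let $\bar\gamma$ depend on a prescribed bound of $\|x\|_{W^{1,\infty}}$.
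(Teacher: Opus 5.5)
Your difference equation and the resulting bound
\[
\partial_t\|\rho\|_\infty\le\bigl(-\gamma+C\|\nabla w^2(t)\|_\infty\bigr)\|\rho\|_\infty
\]
are exactly what the paper uses. The gap is in your choice of $w^2$. Taking $w^2=w(\cdot;y)$ with $y$ arbitrary forces you to control $\nabla w(\cdot;y)$ uniformly over all data in the support of $\mu$. For general $x$, it also forces you to prove absorption into a fixed $W^{1,\infty}$-ball at a single threshold $\bar\gamma$.

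Neither is available. Theorem \ref{Theorem:w bounded} gives $\sup_t\|\nabla w(t)\|_\infty<\infty$ only for $\gamma\ge\gamma^\star(w_0)$. For fixed $\gamma$ and large data, the log-Gronwall argument cannot rule out that $\log\|\nabla w\|$ has grown beyond the level $\gamma/(CR)$ during the transient. In that case the drift $-\gamma+CR(1+\log_+\|\nabla w\|)$ stays positive even after $\|w\|_\infty$ has dropped to $O(R)$.

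So three steps remain unproved, as you concede: your ``first key step'', the entry time $k(x)$, and the uniqueness step, which needs every invariant $\nu$ to charge $\mathcal B_\rho$. Your fallbacks yield a weaker theorem than the one stated. You also restrict to $x\in W^{1,\infty}$, while the statement is for $x\in E$. In fact the dynamics is well posed for $L^\infty$ vorticity data by Theorem \ref{existence and uniqueness}(iii).

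The missing idea is to let the gradient fall on one fixed reference trajectory. Fix $z\in W^{1,\infty}$, e.g.\ $z=0$, and set $\rho=w^z-w^y$, so that
\[
\partial_t\rho+\gamma\rho+u^y\cdot\nabla\rho+(\mathcal K\ast\rho)\cdot\nabla w^z=0 .
\]
The same $L^p$ argument gives, for every $y\in E$,
\[
\|w^z(t)-w^y(t)\|_\infty\le\|z-y\|_\infty\,e^{\int_0^t(-\gamma+C\|\nabla w^z(s)\|_\infty)\,ds},
\]
and this requires nothing of $w^y$ beyond $L^\infty$.

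Theorem \ref{Theorem:w bounded} is then applied once, to the single datum $z$, giving $M_z=\sup_t\|\nabla w^z(t)\|_\infty<\infty$. Taking $\gamma>CM_z$ gives a universal $\bar\gamma$ and a pathwise exponential contraction toward $w^z$. The triangle inequality then yields, for all $x,y\in E$,
\[
\|w^x(t)-w^y(t)\|_\infty\le C\bigl(\|x-z\|_\infty+\|y-z\|_\infty\bigr)e^{-c(\gamma)t}.
\]
Integrating in $y$ against $\mu$ gives the mixing estimate with $c(x)\sim\|x-z\|_\infty+\int\|y-z\|_\infty\,\mu(dy)$. Uniqueness follows from the resulting Fortet--Mourier bound. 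No absorbing ball, no support property of $\mu$, and no gradient control of $w^x$ or $w^y$ is needed.
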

\begin{proof}
 Let $x,y\in E$, and denote by $w^x$ the solution of
(\ref{eqn:Euler 1}) with initial condition $x$. Set
$\rho^{x-y}=w^x-w^y$. We fix once and for all a reference point
$z\in W^{1,\infty}$, for instance $z=0$.
 Thanks to (\ref{eqn:Y}), we obtain by subtracting the equations satisfied by $w^z(t)$ and $w^y(t)$, multiplying $\rho^{z-y}(t)\cdot |\rho^{z-y}(t) \vert^{p-2} $ with $p>2$, and then integrating over $\mathbb{T}^2$:
    \begin{align*}
       \frac1p \partial_t \|\rho^{z-y}\Vert_p^p + \gamma \|\rho^{z-y}(t)\Vert_p^p &= - \langle (\mathcal{K}\ast\rho^{z-y})(t)\cdot \nabla w^z(t), \rho^{z-y}(t)\cdot |\rho^{z-y}(t) \vert^{p-2}\rangle \\&\ \ -\langle (\mathcal{K}\ast w^y)(t)\cdot \nabla \rho^{z-y}(t), \rho^{z-y}(t)\cdot |\rho^{z-y}(t) \vert^{p-2}\rangle.       \end{align*}
 Since $\nabla \cdot (\mathcal{K}\ast w^y) (t)=0$, we have \begin{align*}
     \langle (\mathcal{K}\ast w^y) (t)\cdot \nabla \rho^{z-y}(t), \rho^{z-y}(t)\cdot |\rho^{z-y}(t) \vert^{p-2}\rangle &= \frac1p \langle (\mathcal{K}\ast w^y) (t),\nabla | \rho^{z-y} (t)\vert^p \rangle\\&= -\frac1p\langle \nabla \cdot (\mathcal{K}\ast w^y) (t),|\rho^{z-y}(t)\vert^p \rangle=0. \end{align*}
     Then, by Lemma \ref{Lemma:K Lp} and H{\"o}lder's inequality, we have
     $$\frac1p \partial_t \|\rho^{z-y}\Vert_p^p + \gamma \|\rho^{z-y}(t)\Vert_p^p \leq C\|\nabla w^z(t)\Vert_\infty \cdot \|\rho^{z-y}(t)\Vert_p^p.  $$
     Since $\partial_t\| \rho^{z-y}\Vert_{p}^p= p \|\rho^{z-y}(t)\Vert_{p}^{p-1}\cdot \partial_t\| \rho^{z-y}\Vert_{p}  $, we deduce that  
     $$\partial_t\| \rho^{z-y}\Vert_{p} \leq (-\gamma +C \|\nabla w^z(t)\Vert_\infty )\cdot \| \rho^{z-y}(t)\Vert_{p}.  $$
     Thus, by Gr{\"o}nwall inequality, for any $p>2$
     $$\| \rho^{z-y}(t)\Vert_{p} \leq \|z-y\Vert_p\cdot e^{\int_0^t(-\gamma+ C||\nabla w^z(s)\Vert_\infty )\;ds}. $$
Taking $p \rightarrow \infty$, the above inequality implies that
\begin{equation}
    \|w^z(t)-w^y(t)\Vert_\infty \leq \|z-y\Vert_\infty\cdot e^{\int_0^t(-\gamma+ C\|\nabla w^z(s)\Vert_\infty )\;ds}.\end{equation}
Since \(z\in W^{1,\infty}\) is fixed, Theorem~\ref{Theorem:w bounded} implies that, for
\(\gamma\ge \gamma^\star(z)\),
\[
M_z:=\sup_{t\ge0}\|\nabla w^z(t)\|_\infty<\infty .
\]
Choosing \(\gamma\) sufficiently large so that \(\gamma>C M_z\), we obtain
\[
\|w^z(t)-w^y(t)\|_\infty
\le \|z-y\|_\infty e^{-c(\gamma)t},
\qquad t\ge0,\quad \mathbb P\text{-a.s.}.
\]
By the same arguments as above, 
\begin{equation}
    \|w^z(t)-w^x(t)\Vert_\infty \leq C\|z-x\Vert_\infty  \cdot e^{-c(\gamma)t},\; \mathbb{P}-a.s..\end{equation}
Therefore, we have $\mathbb{P}-a.s. $
\begin{equation}\label{inequality: exponential convergence}
\begin{split}
     \|w^x(t)-w^y(t)\Vert_\infty &\leq \|w^z(t)-w^y(t)\Vert_\infty  + \|w^z(t)-w^x(t)\Vert_\infty \\  &\leq   C(\|z-y\Vert_\infty+ \|z-x\Vert_\infty ) \cdot e^{-c(\gamma)t}.
     \end{split}   
\end{equation}

Let $\mu$ be an invariant measure on $E$. That is to say
$$\int_{E}P_i\phi\;d\mu= \int_{E}\phi\;d\mu, \forall \phi \in C_b(E) .$$ The existence of $\mu$ has been shown by Theorem \ref{Theorem:existence 1} and Remark \ref{remark:extension}.
By (\ref{inequality: exponential convergence}), for $\phi \in \text{Lip}_b(E)$.
\begin{align*}
    \left|P_i\phi (x)-\int_{E}\phi\;d\mu\right\vert&= \left| \mathbb{E}[\phi (w^x(i))]- \int_{E} \phi \;d\mu \right\vert\\&= \left| \mathbb{E}[\phi(w^x(i))]- \int_{E} P_i\phi \;d\mu \right\vert\\&=    \left| \int_{E} \mathbb{E}\left[ \phi (w^x(i))-  \phi(w^y(i)) \right ] \;\mu(dy)  \right\vert\\ &\leq C e^{-c(\gamma)i}\|\phi\Vert_{\text{Lip}}\int_{E}\left(\|z-y\Vert_\infty+ \|z-x\Vert_\infty \right)\;\mu(dy)\\& \leq c(x)e^{-c(\gamma)i}\|\phi\Vert_{\text{Lip}} .
    \end{align*}
We thus get (\ref{eqn:exponential mixing 1}). Next,we will show the uniqueness of $\mu$. 
We denote by $\bar{P}_i(x,\cdot)$ the natural extension for probability measures $P_i(x,\cdot)$ on $E$ as in Remark \ref{remark:extension}. Then, by (\ref{eqn:exponential mixing 1})
\begin{equation}
      \|\bar{P}_i(x,\cdot)-\mu(\cdot)\Vert_{FM} \leq c(x)e^{-c(\gamma)i}.  
\end{equation}
Here, $\| \cdot\Vert_{FM}$ is the Fortet-Mourier norm defined as $$\|\nu\Vert_{FM}= \sup\{|\langle\phi,\nu\rangle\vert: \phi\in\text{Lip}_b(L^\infty), \|\phi\|_0\leq 1, \|\phi\Vert_{\text{Lip}}\leq 1\}.  $$ This implies the uniqueness. 
\end{proof}

Thanks to Corollary \ref{corollary:boundedness}, by similar arguments to those in the above theorem, we also have
\begin{theorem}
    Let $\gamma>0$. There exists $\epsilon(\gamma)>0$ such that there exists a unique invariant measure $\mu^\epsilon$ on $E$ for Eq. (\ref{eqn:Euler 2}). Moreover, for any $0<\epsilon<\epsilon(\gamma)$, $x \in E$ and $\phi \in \text{Lip}_b(E)$,
    \begin{equation}\label{eqn:exponential mixing 2}
        |P_i^\epsilon\phi(x)-\int_{E}\phi\;d\mu^\epsilon\vert \leq c(x)e^{-c(\gamma)i}\|\phi\Vert_{\text{Lip}},
    \end{equation}
    where $c(x)$, $c(\gamma)>0$ depend on $x$ and $\gamma$ respectively.
\end{theorem}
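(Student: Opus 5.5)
The plan is to repeat the contraction argument from the proof of the previous theorem for equation \eqref{eqn:Euler 2}. The only change is that the large-damping input, Theorem~\ref{Theorem:w bounded}, is replaced by the small-noise input, Corollary~\ref{corollary:boundedness}.

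First, fix $\gamma>0$ and the reference point $z=0\in W^{1,\infty}$. Corollary~\ref{corollary:boundedness} gives $\epsilon^\star(\gamma,0)$ such that for $\epsilon<\epsilon^\star$ we have $\sup_{t\ge0}\|\nabla w_\epsilon^z(t)\|_\infty<\infty$, $\mathbb P$-a.s. A uniform bound alone is not enough: we need it to be smaller than $\gamma/C$, where $C$ is the constant in $\partial_t\|\rho\|_p\le(-\gamma+C\|\nabla w^z\|_\infty)\|\rho\|_p$. For this we go back into the proof of the corollary.

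\begin{itemize}
\item For $t\ge t^\star$ one has $\|\nabla Y_{\lambda,\epsilon}(t)\|_{p^\star}\le\delta$ and $\|Z_{\lambda,\epsilon}\|_{H^a}\le R_\epsilon$.
\item Feeding this into \eqref{inequality:Y W1,infty} gives $\limsup_{t\to\infty}\|\nabla w^z_\epsilon(t)\|_\infty\le C'(\delta+R_\epsilon)$. This bound is deterministic because $B$ is deterministic.
\item Choose $\delta$, and then $\epsilon(\gamma)\le\epsilon^\star$, so small that $CC'(\delta+R_\epsilon)\le\gamma/2$.
\end{itemize}

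With $z=0$ these constants depend only on $\gamma$. The finite transient $[0,t^\star]$ contributes only a multiplicative constant $e^{C\int_0^{t^\star}\|\nabla w^z_\epsilon\|_\infty}$. Since $z$ is fixed and the bounds are pathwise with deterministic data, this constant is deterministic.

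Second, run the $L^p$ estimate for $\rho=w^z_\epsilon-w^y_\epsilon$ exactly as before. The noise cancels in the difference, the transport term vanishes by incompressibility, and Lemma~\ref{Lemma:K Lp} controls $\mathcal K\ast\rho$. Grönwall and $p\to\infty$ then give
\[
\|w^z_\epsilon(t)-w^y_\epsilon(t)\|_\infty\le C_\gamma\|z-y\|_\infty e^{-\gamma t/2}.
\]
The triangle inequality through $z$ then gives the analogue of \eqref{inequality: exponential convergence} for any $x,y\in E$. Some care is needed here, since the estimate is only formal for $y\in E$. I would justify it for $y\in W^{1,\infty}$, where the solution theory applies, and extend it to $E$ by density, using the Lipschitz dependence in $L^\infty$ just obtained to define the flow on $E$.

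Third, existence of an invariant measure on $E$ comes from Theorem~\ref{Theorem: existence 2} together with Remark~\ref{remark:extension}. For such a $\mu^\epsilon$, invariance gives
\[
\Bigl|P_i^\epsilon\phi(x)-\int\phi\,d\mu^\epsilon\Bigr|\le\int\mathbb E\,|\phi(w^x_\epsilon(i))-\phi(w^y_\epsilon(i))|\,\mu^\epsilon(dy),
\]
which is bounded by $C\|\phi\|_{\rm Lip}e^{-\gamma i/2}\int(\|x\|_\infty+\|y\|_\infty)\,\mu^\epsilon(dy)$. The integral is finite because, by Lemma~\ref{lemma:w infity}, $\mu^\epsilon$ is supported in the absorbing ball $\{\|y\|_\infty\le R_\epsilon\}$. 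This yields \eqref{eqn:exponential mixing 2} with $c(x)=C(\|x\|_\infty+R_\epsilon)$ and $c(\gamma)=\gamma/2$. Uniqueness follows as before: two invariant measures $\mu_1,\mu_2$ both satisfy this estimate, so their Fortet--Mourier distance is at most a constant times $e^{-c(\gamma)i}$ for every $i$, hence zero.

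The main obstacle is the first step. The corollary only asserts that the gradient stays finite, and we must extract a uniform-in-time bound on $\|\nabla w^z_\epsilon\|_\infty$ that is small relative to $\gamma$, at least for large $t$, with constants independent of $\omega$. If that smallness cannot be obtained directly, the fallback is to average the exponent: show that $\frac1t\int_0^t\|\nabla w^z_\epsilon\|_\infty\,ds\to O(\delta+R_\epsilon)$, which already suffices for a Grönwall rate of $\gamma/2$.
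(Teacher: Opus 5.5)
Your proposal is correct and follows the paper's route. The paper proves this theorem in one line, by rerunning the contraction argument of the preceding theorem with Corollary~\ref{corollary:boundedness} in place of Theorem~\ref{Theorem:w bounded}; your proposal makes explicit what that line leaves implicit, namely the eventual smallness of $\|\nabla w^z_\epsilon\|_\infty$ relative to the now fixed $\gamma$, and the bound $\int\|y\|_\infty\,\mu^\epsilon(dy)\le R_\epsilon$ coming from the absorbing ball.

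One slip, in your side remark on $y\in E$: your $L^\infty$ estimate is anchored at $z$, since it needs a gradient bound on one of the two solutions, so it cannot by itself define the flow on $E$ by density. This is harmless, because Theorem~\ref{existence and uniqueness}(iii) already gives the solution for $y\in E$, and the bound on $\rho$ follows directly along the characteristics of the log-Lipschitz field $u^y$, with the bounded source $-(\mathcal{K}\ast\rho)\cdot\nabla w^z$.
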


\begin{remark}
Notice that the extended measure $\bar{\mu}$ is an invariant measure on $E $. By the uniqueness result we proved above, $\bar{\mu}$ is exactly the unique invariant measure for the equation (\ref{eqn:Euler 1}). Moreover, by the construction of $\bar{\mu}$, its support is, in fact, on $W^{1,\infty}$. The same result also holds for $\mu^\epsilon$.
\end{remark}


\bibliographystyle{siam}
\newpage
\footnotesize
\bibliography{Ref}
\end{document}